\documentclass[a4paper,reqno,11pt]{amsart}
\usepackage[T1]{fontenc}
\usepackage[utf8]{inputenc}
\usepackage{amssymb, amsmath, amsthm, graphicx, enumerate, enumitem, color}
\usepackage[dvipsnames,svgnames,table]{xcolor}
\usepackage[foot]{amsaddr}
\usepackage[normalem]{ulem}

\makeatletter
\def\thm@space@setup{
\thm@preskip=4mm
\thm@postskip=0mm
}
\makeatother

\usepackage{hyperref}
\usepackage{doi}
\usepackage[capitalise, compress, noabbrev]{cleveref}
\definecolor{linkblue}{named}{MidnightBlue}
\hypersetup{colorlinks=true, linkcolor=linkblue,  anchorcolor=linkblue,
	citecolor=linkblue, filecolor=linkblue, menucolor=linkblue,
	urlcolor=linkblue,
    pdftitle={Tree decompositions whose trees are subgraphs: An application of Simon's factorization}
    } 
\usepackage{mathtools}
\usepackage{thmtools, thm-restate}
\usepackage[longnamesfirst,numbers,sort&compress]{natbib}

\usepackage{textpos}

\theoremstyle{plain}
\newtheorem{thm}{Theorem}
\newtheorem*{thm*}{Theorem}
\newtheorem{theorem}[thm]{Theorem}

\newtheorem{lemma}[thm]{Lemma}
\newtheorem*{lemma*}{Lemma}

\newtheorem*{cor*}{Corollary}

\newtheorem{claim}[thm]{Claim}
\newtheorem*{lem*}{Lemma}

\newtheorem*{conjecture*}{Conjecture}

\newcommand{\N}{\mathbb{N}}
\let\le\leqslant

\let\leq\leqslant
\let\geq\geqslant

\let\epsilon\varepsilon

\let\setminus-
\let\varphi\phi

\DeclareMathOperator\pw{pw}
\DeclareMathOperator\tw{tw}

\DeclareMathOperator\hrank{height}

\DeclareMathOperator\torso{torso}
\DeclareMathOperator\cmp{cmp}

\newcommand{\abst}[1]{[\![#1]\!]}

\DeclarePairedDelimiter\set{\{}{\}}

\setenumerate{label=\textup{(\roman*)}, noitemsep, topsep=0pt,
labelindent=.2em, leftmargin=*, widest=iii,}
\setitemize{noitemsep, topsep=-\parskip, labelindent=.2em, leftmargin=*, widest=iii,}

\title[Tree decompositions whose trees are subgraphs]{Tree decompositions whose trees are subgraphs:\\ An application of Simon's factorization}

\begin{document}

\author[Bourneuf]{Romain Bourneuf}
\address[R.~Bourneuf]{LaBRI, Université de Bordeaux, Bordeaux, France}
\email{romain.bourneuf@ens-lyon.fr}

\author[Joret]{Gwena\"el Joret}
\address[G.~Joret]{D\'epartement d'Informatique, Universit\'e libre de Bruxelles, Belgium}
\email{gwenael.joret@ulb.be}

\author[Micek]{Piotr Micek}
\address[P.~Micek]{Department of Theoretical Computer Science, Jagiellonian University, Kraków, Poland}
\email{piotr.micek@uj.edu.pl}

\author[Milani\v{c}]{Martin Milani\v{c}}
\address[M.~Milani\v{c}]{Faculty of Mathematics, Natural Sciences and Information Technologies and Andrej Marušič Institute, University of Primorska, Koper, Slovenia}
\email{martin.milanic@upr.si}

\author[Pilipczuk]{Michał Pilipczuk}
\address[M.~Pilipczuk]{Institute of Informatics, Faculty of Mathematics, Informatics, and Mechanics, University of Warsaw, Poland}
\email{michal.pilipczuk@mimuw.edu.pl}

\thanks{G.\ Joret is supported by the Belgian National Fund for Scientific Research (FNRS). P.\ Micek is supported by the National Science Center of Poland under grant UMO-2023/05/Y/ST6/00079 within the WEAVE-UNISONO program. 
M.\ Milani\v{c} is supported in part by the Slovenian Research and Innovation Agency (I0-0035, research program P1-0285 and research projects J1-60012, J1-70035, J1-70046, and N1-0370) and by the research program CogniCom (0013103) at the University of Primorska.
M.\ Pilipczuk is supported by the project BOBR that has received funding from the European Research Council (ERC) under the European Union’s Horizon 2020 research and innovation programme (grant agreement No. 948057).}

\begin{abstract}
	We show that every connected graph $G$ has a tree decomposition indexed by a tree $T$ such that $T$ is a subgraph of $G$ and the width of the tree decomposition is bounded from above by a function of the pathwidth of $G$.
	This answers a question of  Blanco, Cook, Hatzel, Hilaire, Illingworth, and
	McCarty (2024), who proved that it is not possible to have such a tree decomposition whose width is bounded by a function of the treewidth of $G$.

	The proof relies on Simon's Factorization Theorem for finite semigroups, a tool that has already been applied successfully in various areas of graph theory and combinatorics in recent years. Our application is particularly simple and can serve as a good introduction to this technique.
\end{abstract}

\maketitle
\begin{textblock}{20}(-1.4, 6.6)
	\includegraphics[width=40px]{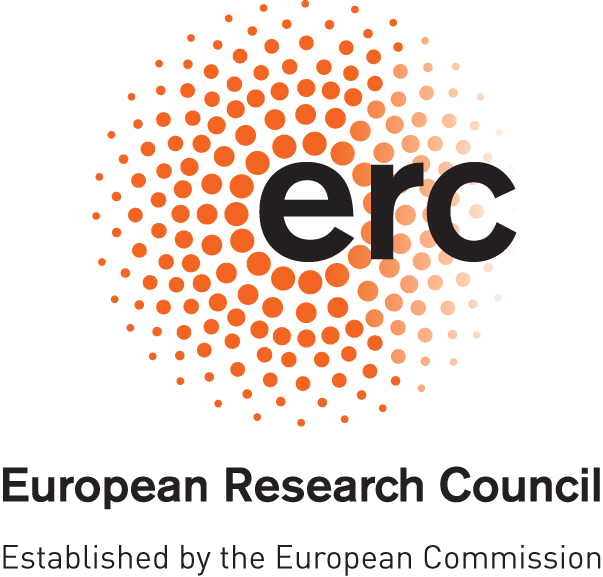}%
\end{textblock}
\begin{textblock}{20}(-1.4, 7.5)
	\includegraphics[width=40px]{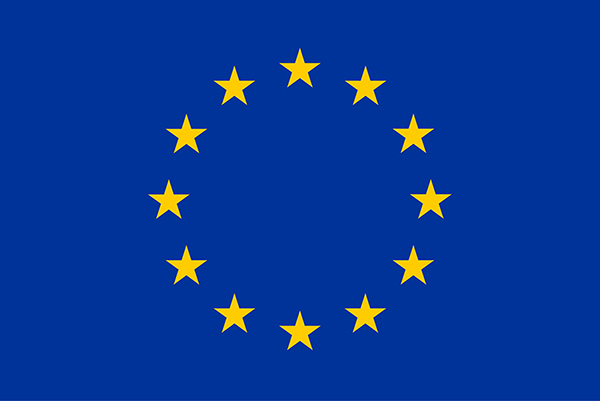}%
\end{textblock}

\section{Introduction}

Graphs admitting tree decompositions of small width are one of the main building blocks of structural and algorithmic graph theory. Informally speaking, a tree decomposition displays how the considered graph can be pieced together from smaller parts combined along a tree, which we shall call the \emph{indexing tree}. A natural expectation is that the geometry of the indexing tree should roughly follow the geometry of the graph itself.
\citet{D19} made this question concrete by asking the following: Is it true that every connected graph $G$ has a tree decomposition whose indexing tree is a subgraph of $G$, and whose width is bounded by some function of the treewidth of $G$?

Dvo\v{r}\'ak's question has recently been answered in the negative by \citet{BCHHIM2024}.
In fact, they showed that this is not possible even if we only require the indexing tree to be a minor of $G$.

The construction in~\cite{BCHHIM2024} has bounded treewidth (treewidth $2$ in fact) but has unbounded pathwidth.
This observation lead the authors of \cite{BCHHIM2024} to relax Dvořák's question as follows: Is it true that every connected graph $G$ has a tree decomposition whose indexing tree is a subgraph of~$G$, and whose width is bounded by some function of the {\em pathwidth} of $G$?
In this paper, we answer this question positively.

\begin{restatable}{theorem}{restateMain}\label{thm:main_intro}
There exists a function $f:\N \to \N$ such that, for every connected graph $G$ of pathwidth less than $k$, there is a tree decomposition of $G$ of width at most $f(k)$ indexed by a tree $T$ that is a spanning tree of $G$. Moreover, in this tree decomposition, every vertex belongs to its own bag.
\end{restatable}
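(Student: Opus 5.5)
We will argue by induction on $k$, using a path decomposition $(B_1,\dots,B_n)$ of $G$ of width less than $k$, and reduce the problem to a statement about words over a finite semigroup so that Simon's Factorization Theorem applies. Normalize the path decomposition so that consecutive bags differ by one vertex (introduce/forget). To each bag $B_i$, attach the set $B_i$ of at most $k$ vertices together with an abstract \emph{type} recording:
\begin{itemize}
\item how the prefix $G_i=G[B_1\cup\dots\cup B_i]$ connects $B_i$: the partition of $B_i$ induced by components of $G_i$, and which components have already lost all their vertices from the bag;
\item for each pair of boundary vertices, whether a spanning forest of the processed part can route between them.
\end{itemize}
After identifying $B_i$ with $\{1,\dots,k\}$, the transition between consecutive positions is an element of a finite semigroup $S_k$, namely binary relations or partition-compositions on $\{1,\dots,k\}$ with connectivity data. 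Its size depends only on $k$. The graph is thus encoded as a word $w\in S_k^{\,*}$.

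Next we apply Simon's Factorization Theorem. Every word over $S_k$ has a factorization tree of height at most $h(|S_k|)$ in which every node is either a binary concatenation, or an idempotent node whose children all evaluate to the same idempotent $e$. We build the spanning tree $T$ and its decomposition by induction on the height of this factorization tree. The invariant for an interval $I$ of positions is as follows. There is a spanning forest $F_I$ of the graph induced on vertices whose whole lifetime lies within $I$, plus boundary vertices. It comes with a tree decomposition indexed by $F_I$ in which each vertex is in its own bag, every bag contains the two boundary bags $B_{\min I}\cup B_{\max I}$ (at most $2k$ vertices), and the width is bounded by $g(\text{height})$. Binary nodes are handled by merging two forests along the shared middle boundary bag. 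We add edges of $G$ to link components where needed, and put the middle boundary into all bags. Width grows by $O(k)$ per level.

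The main obstacle is the idempotent node with unboundedly many children $I_1,\dots,I_m$, since we cannot afford to add all $m$ middle boundaries to the bags. Here idempotency helps: each factor has the same connectivity behaviour $e$. In particular, a vertex or component that is connected across one factor is connected across all of them in the same pattern, so components of the concatenation stabilize. The plan is to glue consecutive forests $F_{I_j}$ and $F_{I_{j+1}}$ only along their shared boundary $B$. We use the edge structure guaranteed by $e$, namely the components that $e\cdot e=e$ certifies to persist, to select connecting edges. Then $T$ restricted to the union has a chain-like shape. For each vertex we add to its bag the boundary bags of its own factor and of its neighbours, plus the outer boundary $B_{\min}\cup B_{\max}$. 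An edge of $T$ between factors then lies in a bag containing both sides, and connectivity of bags for a vertex $v$ holds because $v$ spans a contiguous run of factors, all of which see $v$ in their boundary bags. The remaining difficulty is vertices that live across many factors, which are at most $k$ of them (they lie in all boundaries). We handle these by induction on $k$: deleting such persistent vertices lowers the pathwidth inside the idempotent block. We would try to make this recursion interact with the factorization so that the total width is $f(k)\le g(h(|S_k|))$ plus the inductive term. At the root, with empty boundaries and $G$ connected, we obtain a spanning tree of $G$ with the required decomposition.
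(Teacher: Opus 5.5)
Your overall skeleton matches the paper's: encode a nice path decomposition as a word over a finite semigroup of boundary summaries, apply Simon's factorization, and induct on the height with an $O(k)$ loss per level, splitting into a binary case and an idempotent case. But the steps that carry the real content are either flawed or not proved.

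\textbf{The invariant makes pieces overlap and breaks the decomposition axioms.} Your $F_I$ contains the boundary vertices of $I$. So in both the binary and the idempotent step you glue forests that share the whole middle bag. If two vertices of that bag are connected in both $F_{I_1}$ and $F_{I_2}$, the union contains a cycle, and deleting an edge of an indexing forest in general invalidates the decomposition it indexes. Also, ``every bag contains $B_{\min I}\cup B_{\max I}$'' (and ``put the middle boundary into all bags'') violates the connectivity axiom whenever $F_I$ is disconnected. That is forced whenever the graph on $I$ is disconnected.

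The paper avoids both problems by proving the stronger statement of \cref{lem:technical}: $\cmp(G-X)\le 3k\cdot\hrank(w)-1$ for \emph{every} $X\subseteq L$. Each factor is then decomposed without its left interface $X_i=R_{i-1}\cap L_i$, so the pieces $V_i=V(G_i)-X_i$ are vertex-disjoint. Consecutive pieces are joined by inclusion-wise maximal sets $M_i$ of $V_i$--$V_{i+1}$ edges that keep $F_i\cup F_{i+1}+M_i$ acyclic. An interface vertex is added only to the bags of its own component of that local forest (\cref{lemma:partition} and the definition of $W_x$ in the unranked case).

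\textbf{The idempotent case is asserted, not proved.} Your claim that the glued structure is ``chain-like'', i.e.\ acyclic, is exactly the heart of the proof (\cref{claim:F-is-a-forest}), and you give no argument for it. ``Components stabilize'' is not enough: a cycle can run through arbitrarily many factors even though every local gluing $F_j\cup F_{j+1}+M_j$ is acyclic.

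What is needed is a locality property. Suppose $u,v\in R_{i-1}$ are joined in $G_1\cup\dots\cup G_{i-1}$ by a path that avoids $L_1$ and has no internal vertex in $R_{i-1}$. Then they must already be joined inside $G_{i-1}-X_{i-1}$, hence inside $F_{i-1}$. This lets you reroute a cycle of index $i$ into one of smaller index. The paper gets this property from idempotence because its semigroup records the torso on $L\cup R$, and the prefix and the last factor then have isomorphic torsos (\cref{cl:all-same-abstraction}).

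Your semigroup is not specified precisely enough to deliver this. Part of your type is prefix data, and part depends on a chosen spanning forest rather than on the graph. So it is not even clear that concatenation induces a well-defined semigroup product on it.

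\textbf{The persistent vertices are left open.} You only say you ``would try'' an induction on $k$, and none is needed. By idempotence (\cref{claim:idempotent-case-persistent-elements}), any vertex lying in three consecutive factors lies in $L_j\cap R_j$ for all $j$, hence in $L_1$. The paper therefore bounds $\cmp(G-L_1)\le 3kh-k-1$ and then adds $L_1-X$ back at cost at most $k$ by \cref{lemma:add_vertices}. After deleting $L_1$, every vertex lives in at most two consecutive factors. That is why adding only the two adjacent interfaces to each bag suffices.
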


It is worth noting that in general we cannot hope to obtain a path decomposition of $G$ of width at most $f(k)$ indexed by a path $P$ that is a subgraph of $G$: the family of stars is a counterexample.

The main tool in our proof is Simon's Factorization Theorem for finite semigroups~\cite{S95}.
This theorem and the closely related theorem of Colcombet~\cite{Colcombet07} found several applications in graph theory and combinatorics in recent years, see e.g.\ \cite{BP16, DHM18, BGP21, BP20, NORS21, BP22, PSSTV22, BO24, BO25, CGKKO25, NMPRS21, JMPW24, L25, GGK26, GG26}.
We refer the reader to \cite{B09, Colcombet21} for surveys about Simon's Factorization Theorem.

In our case, the way we apply Simon's Factorization Theorem on path decompositions is directly inspired by the proof method of~\citet{BP16}. The goal of Boja\'nczyk and Pilipczuk was to show that every graph of pathwidth less than $k$ has a tree decomposition of width bounded by a function of $k$ that can be described in Monadic Second Order logic. The description was formalized through the notion of \emph{guided treewidth}, so their result can be concisely stated as follows: the guided treewidth of a graph is bounded by a function of its pathwidth. In fact, this conclusion can be easily derived from the statement of \cref{thm:main_intro}, so our result can be understood as a strengthening of the result of Boja\'nczyk and Pilipczuk; we omit the details.

Let us now briefly explain the approach to the proof of \cref{thm:main_intro}.
Very informally, given a path decomposition of $G$, Simon's Factorization Theorem allows us to  split the path decomposition either into two parts that are arbitrary but simpler, or into an unbounded number of parts that are all simpler and that all behave `in the same way' with respect to the problem at hand.
In the first case, we apply induction on the two simpler parts and combine the resulting tree decompositions, incurring a fixed cost in the width.
In the second case, we first apply induction on each of the parts (which are simpler), and then combine all the resulting tree decompositions in one step, incurring a fixed cost in the width that is independent of the number of parts, thanks to the fact that all the parts behave in the same way. This is the key step of the proof, where we crucially use the power of Simon's Factorization Theorem.

Our application of Simon's Factorization Theorem is described in full details, with a pedagogical aim:  
In our opinion, our application is simpler and less technical than other recent applications in graph theory, and thus it can serve as a good introduction to the technique for the interested reader.

We conclude this introduction by mentioning that the bounding function $f$ in \cref{thm:main_intro} resulting from our proof is quite large (order of $2^{\mathcal{O}(k^2)}$), which is typical of applications of Simon's Factorization Theorem.
It would be interesting to determine whether a polynomial bound exists.
We note that this question was already raised in~\cite{BCHHIM2024}.

\section{Preliminaries}
\label{sec:prelim}

We denote by $\N$ and $\N^+$ the sets of nonnegative and positive integers, respectively.
Given a positive integer $m$, we use the shorthand $[m]$ for the set $\{1, 2, \dots, m\}$.
Given a function $\phi:Y\to Z$ and $X\subseteq Y$, we define $\phi(X)\coloneqq\set{\phi(x)\mid x\in X}$.
All graphs in this paper are finite, simple, and undirected.
Let $G$ be a graph and let $X\subseteq V(G)$.
By $G[X]$ we denote the subgraph of $G$ induced by $X$.
Also, we let $N_G(X)$ denote the set of vertices outside $X$ that have a neighbor in $X$.
If $X=\set{v}$, we write $N_G(v)$ for the set~$N_G(\{v\})$.
For two graphs $G_1 = (V_1, E_1)$ and $G_2 = (V_2, E_2)$, we denote by $G_1 \cup G_2$ the graph $(V_1 \cup V_2, E_1 \cup E_2)$.
For a graph $G = (V, E)$ and a set $F \subseteq \binom{V}{2}$, we denote by $G + F$ the graph $(V, E \cup F)$.

If $G$ is a graph and $A,B,S\subseteq V(G)$, then we say that $S$ {\em{separates}} $A$ and $B$ if every path in $G$ with one endpoint in $A$ and the other endpoint in $B$ intersects $S$.

A \emph{tree decomposition} of a graph $G$ is a pair $(T,(W_x)_{x \in V(T)})$,
where $T$ is a tree and $W_x \subseteq V(G)$ for every $x \in V(T)$, with the
following properties:
\begin{enumerate}
	\item for every vertex $u$ in $G$, the subgraph of $T$ induced by $\{x \in V(T) \mid u \in W_x\}$ is non-empty and connected; and
	\item for every edge $uv$ in $G$, there exists $x\in V(T)$ such that $u,v\in W_x$.
\end{enumerate}

We call the sets $W_x$ the \emph{bags} of the tree decomposition.
The \emph{width} of a tree decomposition is the maximum size of a bag minus one.
The \emph{treewidth} of a graph $G$ is defined to be the minimum width of a tree decomposition of $G$, and is denoted $\tw(G)$.

A \emph{path decomposition} of a graph $G$ is a tree decomposition where the tree is required to be a path.
The minimum width of a path decomposition of $G$ is the {\em pathwidth} of $G$, denoted $\pw(G)$.
In this paper, we denote a path decomposition of $G$ simply as a sequence $(W_1, W_2, \dots, W_m)$
of bags where the order of the bags corresponds to the order of the vertices in the underlying path.
A path decomposition $(W_1, W_2, \dots, W_m)$ is \emph{nice} if $|(W_i \setminus W_{i+1})\cup (W_{i+1}\setminus W_i)| = 1$ for every $i \in [m-1]$.
It is well known (and an easy exercise) that every path decomposition can be turned into a nice path decomposition without increasing its width. In particular, there always exists a minimum-width path decomposition that is nice.

A \emph{semigroup} is a pair $(S,\cdot)$ such that $S$ is a set and $\cdot:S\times S\to S$ is an associative binary operation on $S$, that is, $(s\cdot t)\cdot u = s\cdot (t\cdot u)$ for all $s,t,u\in S$.
An \emph{idempotent} in a semigroup $(S,\cdot)$ is an element $s\in S$ such that $s \cdot s = s$.
Given two semigroups $(S,\cdot)$ and $(T,\circ)$, a \emph{semigroup homomorphism} (from $(S, \cdot)$ to $(T, \circ)$) is a mapping $h:S\to T$ such that for all $s_1,s_2\in S$, we have $h(s_1\cdot s_2) = h(s_1)\circ h(s_2)$.

\subsection{\texorpdfstring{$k$}{k}-interface graphs and abstractions}

We need various definitions and tools that are adapted from \cite{BP16}.

Given a positive integer $k$, a \emph{$k$-interface graph} is a tuple $\mathbb{G}=(G,\phi,L,R)$ where $G$ is a graph, $L, R \subseteq V(G)$ and $\phi:V(G) \to [k]$ is a labeling function that is injective on each of $L$ and $R$.
The vertices in $L$ and $R$ are called {\em left} and {\em right} vertices of $\mathbb{G}$, respectively.

Let $\mathbb{G}_1 = (G_1,\phi_1,L_1,R_1)$ and $\mathbb{G}_2 = (G_2,\phi_2,L_2,R_2)$ be two $k$-interface graphs.
Let $J\coloneqq\phi_1(R_1)\cap \phi_2(L_2) \subseteq [k]$.
We say that $\mathbb{G}_1$ and $\mathbb{G}_2$ are \emph{compatible} if $V(G_1)\cap V(G_2) = \phi_1^{-1}(J) \cap R_1 = \phi_2^{-1}(J) \cap L_2$, and $\phi_1$ and $\phi_2$ coincide on $V(G_1) \cap V(G_2)$.
(Let us emphasize that we do not require that $G_1[V(G_1) \cap V(G_2)]=G_2[V(G_1) \cap V(G_2)]$.)
Observe that if $\mathbb{G}_1$ and $\mathbb{G}_2$ are compatible, then $V(G_1) \cap V(G_2) = R_1 \cap L_2$.

We now define a \emph{gluing} operation on compatible $k$-interface graphs.
Let $\mathbb{G}_1 = (G_1,\phi_1,L_1,R_1)$ and $\mathbb{G}_2 = (G_2,\phi_2,L_2,R_2)$ be two compatible $k$-interface graphs.
Let $G \coloneqq G_1 \cup G_2$, and $\phi(v)\coloneqq \phi_i(v)$ for each $i\in[2]$ and each $v\in V(G_i)$.
(Since $\mathbb{G}_1$ and $\mathbb{G}_2$ are compatible, this is well-defined for $v\in V(G_1)\cap V(G_2)$.)
We define $\mathbb{G}_1\oplus\mathbb{G}_2$ to be the $k$-interface graph $(G,\phi,L_1,R_2)$.

We generalize the notion of compatibility to a sequence $\mathbb{G}_1, \mathbb{G}_2, \ldots, \mathbb{G}_n$ of $k$-interface graphs with $n\geq 2$ as follows: Letting $\mathbb{G}_i = (G_i,\phi_i,L_i,R_i)$ for each $i\in [n]$, we say that the sequence $\mathbb{G}_1, \mathbb{G}_2, \ldots, \mathbb{G}_n$ is {\em compatible} if
\begin{itemize}
	\item $\mathbb{G}_i$ and $\mathbb{G}_{i+1}$ are compatible for each $i\in [n-1]$, and
	\item for each $i, j, \ell \in [n]$ with $i< j<\ell$, if $v\in V(G_i)\cap V(G_\ell)$, then $v\in V(G_j)$.     
\end{itemize}
Observe that if $\mathbb{G}_1, \mathbb{G}_2, \mathbb{G}_3$ is a sequence of three $k$-interface graphs that is compatible, then the two pairs $(\mathbb{G}_1 \oplus \mathbb{G}_2), \mathbb{G}_3$ and $\mathbb{G}_1,  (\mathbb{G}_2 \oplus \mathbb{G}_3)$ are compatible as well. (The easy proof is left to the reader.)
Thus, $(\mathbb{G}_1\oplus\mathbb{G}_2)\oplus\mathbb{G}_3$ and $\mathbb{G}_1\oplus(\mathbb{G}_2\oplus\mathbb{G}_3)$ are well-defined and equal.
It follows from this observation that $\oplus$ is well-defined and associative when considering a sequence $\mathbb{G}_1, \mathbb{G}_2, \ldots, \mathbb{G}_n$ of $k$-interface graphs with $n\geq 2$ that is compatible; in particular, the $k$-interface graph $\mathbb{G}_1 \oplus \mathbb{G}_2 \oplus \cdots \oplus \mathbb{G}_n$ is defined and is independent of the order in which the operation $\oplus$ is carried out.

Two $k$-interface graphs $(G_1,\phi_1,L_1,R_1)$ and $(G_2,\phi_2,L_2,R_2)$  are \emph{isomorphic} (to each other) if there exists a bijective function $\rho: V(G_1)\to V(G_2)$ such that
\begin{enumerate}
	\item for all $u,v\in V(G_1)$,  $uv\in E(G_1) \Leftrightarrow \rho(u)\rho(v)\in E(G_2)$; and
	\item for all $u \in V(G_1)$, we have
	      \begin{itemize}
		      \item $\phi_1(u) = \phi_2(\rho(u))$,
		      \item $u\in L_1 \Leftrightarrow \rho(u) \in L_2$, and
		      \item $u\in R_1 \Leftrightarrow \rho(u) \in R_2$.
	      \end{itemize}
\end{enumerate}
This notion of isomorphism allows us to consider isomorphism classes of $k$-interface graphs.
We let $\mathcal{I}_k$ denote the set of isomorphism classes of $k$-interface graphs.
Given $I\in \mathcal{I}_k$, we say that a $k$-interface graph $\mathbb{G}$ is a {\em representative} for $I$ if its isomorphism class is $I$.

Given $I_1,I_2\in \mathcal{I}_k$, we define $I_1 \oplus I_2$ as follows: 
First, take a representative $\mathbb{G}_i$ of $I_i$ for each $i\in [2]$ in such a way that $\mathbb{G}_1$ and $\mathbb{G}_2$ are compatible.
(Note that this is always possible.\footnote{We remark that this would not be true if we required $G_1[V(G_1) \cap V(G_2)]=G_2[V(G_1) \cap V(G_2)]$ in the definition of gluing.})
Observe that the isomorphism class of $\mathbb{G}_1\oplus\mathbb{G}_2$ does not depend on the choice of $\mathbb{G}_1$ and $\mathbb{G}_2$.
Then, let $I_1 \oplus I_2$ be the isomorphism class of $\mathbb{G}_1\oplus\mathbb{G}_2$.
Observe that $(I_1 \oplus I_2) \oplus I_3= I_1 \oplus (I_2 \oplus I_3)$ holds for every $I_1,I_2,I_3\in \mathcal{I}_k$, thus $\oplus$ is associative on $\mathcal{I}_k$. (This follows from the associativity of $\oplus$ on compatible sequences of $k$-interface graphs.) Therefore, $(\mathcal{I}_k, \oplus)$ is a semigroup.

The following lemma follows easily from the definitions.

\begin{lemma}
	\label{lemma:basic}
	Let $\mathbb{G}_1,\mathbb{G}_2,\dots, \mathbb{G}_n$ be a compatible sequence of $k$-interface graphs, where $n\geq 2$.
	Let $\mathbb{G}\coloneqq  \mathbb{G}_1 \oplus \mathbb{G}_2 \oplus \cdots \oplus \mathbb{G}_n$.
	For each $i\in [n]$, let $\mathbb{G}_i\eqqcolon(G_i, \phi_i, L_i, R_i)$, and let $\mathbb{G}\eqqcolon(G, \phi, L, R)$.
	Then, the following properties hold:
	\begin{enumerate}
		\item $L = L_1$, $R = R_n$, and $\phi_i(u) = \phi(u)$ for each $i\in [n]$ and each $u\in V(G_i)$;
		      \label{basic:item:L-R-phi}
		\item $V(G)=\bigcup_{i\in[n]} V(G_i)$;
		      \label{basic:item:vertex-partition}
		\item $E(G)=\bigcup_{i\in[n]} E(G_i)$;
		      \label{item:basic:surjection-edges}
		\item $R_i \cap L_{i+1}$ separates $\bigcup_{j\in [i]} V(G_j)$ and $\bigcup_{i+1 \leq j \leq n} V(G_j)$ in $G$ for each $i\in [n-1]$.
		      \label{item:basic-separator}
	\end{enumerate}
\end{lemma}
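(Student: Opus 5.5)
We argue by induction on $n$, the base case $n=2$ following directly from the definition of $\oplus$, and the inductive step using that $\mathbb{G} = (\mathbb{G}_1\oplus\cdots\oplus\mathbb{G}_{n-1})\oplus\mathbb{G}_n$. By the associativity discussion preceding the lemma, the pair $\mathbb{G}' \coloneqq \mathbb{G}_1\oplus\cdots\oplus\mathbb{G}_{n-1}$ and $\mathbb{G}_n$ is compatible and their gluing equals $\mathbb{G}$. Write $\mathbb{G}' = (G',\phi',L',R')$. By induction, $L'=L_1$, $R'=R_{n-1}$, $V(G')=\bigcup_{i<n}V(G_i)$, $E(G')=\bigcup_{i<n}E(G_i)$, and $\phi'$ agrees with each $\phi_i$, $i<n$.

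Items \ref{basic:item:L-R-phi}--\ref{item:basic:surjection-edges} follow immediately. The definition of gluing gives $L=L'=L_1$, $R=R_n$, $G=G'\cup G_n$, and $\phi$ restricting to $\phi'$ on $V(G')$ and to $\phi_n$ on $V(G_n)$. Combining this with the inductive hypothesis yields all three items.

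Item \ref{item:basic-separator} is the only real content. I first record a general fact for a compatible pair $\mathbb{H}_1,\mathbb{H}_2$ with gluing $H$: the set $V(H_1)\cap V(H_2)=R^{H}_1\cap L^{H}_2$ separates $V(H_1)$ and $V(H_2)$ in $H=H_1\cup H_2$. Indeed, every edge of $H$ lies in $H_1$ or $H_2$. So a path from $V(H_1)$ to $V(H_2)$ must, at its first step leaving $V(H_1)\setminus V(H_2)$, use a vertex of $V(H_1)\cap V(H_2)$: an edge with one end in $V(H_1)\setminus V(H_2)$ and the other in $V(H_2)\setminus V(H_1)$ lies in neither $E(H_1)$ nor $E(H_2)$. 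Endpoints already in the intersection are trivially fine.

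For a fixed $i\in[n-1]$, I would apply this fact to the split $\mathbb{A}\coloneqq\mathbb{G}_1\oplus\cdots\oplus\mathbb{G}_i$ and $\mathbb{B}\coloneqq\mathbb{G}_{i+1}\oplus\cdots\oplus\mathbb{G}_n$, with the convention that a single term is just $\mathbb{G}_j$. The main obstacle is verifying that $\mathbb{A}$ and $\mathbb{B}$ are compatible with $\mathbb{A}\oplus\mathbb{B}=\mathbb{G}$, and that $V(A)\cap V(B)=R_i\cap L_{i+1}$. Since the prefix and suffix subsequences are themselves compatible, the preceding observations give this kind of regrouping. More directly, I would compute the intersection. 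By the already-proved items, $V(A)=\bigcup_{j\le i}V(G_j)$, $V(B)=\bigcup_{j>i}V(G_j)$, $R_A=R_i$, and $L_B=L_{i+1}$. Now take $v\in V(G_j)\cap V(G_\ell)$ with $j\le i<\ell$. The second compatibility condition puts $v$ in $V(G_i)$ and in $V(G_{i+1})$ (when $j<i$, resp.\ $\ell>i+1$), so $v\in V(G_i)\cap V(G_{i+1})=R_i\cap L_{i+1}$ by compatibility of consecutive terms. Conversely, $R_i\cap L_{i+1}$ is clearly contained in $V(A)\cap V(B)$. Once the intersection is identified, the separation fact only uses $E(G)=E(A)\cup E(B)$, which is item \ref{item:basic:surjection-edges} applied to the prefix and suffix. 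So I can run the separation argument directly with $H_1=A$, $H_2=B$, avoiding the full compatibility check altogether.
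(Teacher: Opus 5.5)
Your argument is correct and complete; the paper gives no proof beyond remarking that the lemma follows easily from the definitions, and your induction is exactly that routine unfolding. In particular, your treatment of (iv) is the right way to avoid re-checking compatibility of the prefix and suffix gluings: you identify $\bigcup_{j\le i}V(G_j)\cap\bigcup_{j>i}V(G_j)$ with $R_i\cap L_{i+1}$ via the second compatibility condition, and then use only that every edge of $G$ lies in some $G_j$.
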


A $k$-interface graph $(G,\phi,L,R)$ is said to be \emph{basic} if $L\cup R = V(G)$.

For a graph $G$ and a set $X$ of vertices of $G$, the \emph{torso} of $G$ with respect to $X$, denoted by $\torso(G,X)$, is the graph with vertex set $X$ where two distinct vertices $x_1,x_2\in X$ are adjacent if and only if they are connected in $G$ by a path whose internal vertices do not belong to $X$. 
(This path may consist of just a single edge, hence vertices of $X$ that are adjacent in $G$ are also adjacent in $\torso(G,X)$.)
The \emph{abstraction}
$\abst{\mathbb{G}}$
of a $k$-interface graph $\mathbb{G}=(G,\phi,L,R)$ is the isomorphism class of the
basic $k$-interface graph $(\torso(G,L\cup R),\phi|_{L\cup R},L,R)$.
Note that if $I \in \mathcal{I}_k$ and $\mathbb{G}, \mathbb{G}'$ are representatives of $I$, then $\abst{\mathbb{G}} = \abst{\mathbb{G}'}$.
Thus, we can define $\abst{I} \coloneqq \abst{\mathbb{G}}$.

Let $\mathcal{A}_k \subseteq \mathcal{I}_k$ be the set of all abstractions of $k$-interface graphs.
Given $A \in \mathcal{A}_k$, there exists a $k$-interface graph $\mathbb{G} = (G, \phi, L, R)$ such that $A = \abst{\mathbb{G}}$. 
Thus, the $k$-interface graph $\mathbb{H} = (\torso(G,L\cup R),\phi|_{L\cup R},L,R)$ is a representative of $A$.
Observe that $\torso(\torso(G, L \cup R), L\cup R)) = \torso(G, L\cup R)$, hence, $\mathbb{H}$ is a representative of $\abst{\mathbb{H}}$, and therefore $\abst{A} = \abst{\mathbb{H}} = A$.

The set $\mathcal{A}_k$ is a finite set of size upper bounded by
$2^k2^k2^k2^{(2k)^2}=2^{\mathcal{O}(k^2)}$: there are $2^k$ possibilities for $\phi(L)\subseteq [k]$, $2^k$ possibilities for $\phi(R)\subseteq [k]$, $2^k$ possibilities for $\phi(L \cap R)\subseteq [k]$, and fewer than $2^{(2k)^2}$ graphs on $|L \cup R| \leq 2k$ vertices.
Given $A_1, A_2\in \mathcal{A}_k$, we define $A_1 \boxplus A_2 \coloneqq \abst{A_1 \oplus A_2}$.
Observe that $(A_1 \boxplus A_2) \boxplus A_3 = \abst{A_1 \oplus A_2 \oplus A_3} = A_1 \boxplus (A_2 \boxplus A_3)$ holds for every $A_1,A_2,A_3\in \mathcal{A}_k$, thus $\boxplus$ is associative on $\mathcal{A}_k$, that is,$(\mathcal{A}_k,\boxplus)$ is a semigroup. 
The proof that $\boxplus$ is associative is elementary but requires unfolding several levels of definitions, and is therefore somewhat technical. 
We defer it to Appendix~\ref{app:boxplus_asso}.

Crucially, observe that $\abst{\cdot} : (\mathcal{I}_k, \oplus) \to (\mathcal{A}_k, \boxplus)$ is a semigroup homomorphism. For the same reason as before, we defer the proof of this statement to Appendix~\ref{app:proof-morphism}.

\subsection{Simon's Factorization Theorem}
Let $(S,\cdot)$ be a finite semigroup.
We denote by $S^+$ the set of all nonempty finite words over $S$.
Given elements $s_1, s_2, \dots, s_n \in S$, we  write $s_1s_2\ldots s_n$
to denote the word in $S^+$ resulting from their concatenation.
For a word $w=s_1s_2\ldots s_n\in S^+$, the length $n$ of $w$ is denoted by $|w|$.
We define the evaluation function $[\cdot]_S:S^+\to S$ as follows:
If $w=s_1s_2\ldots s_n$ with $s_i\in S$ for each $i\in[n]$, then
\[
	[w]_S \coloneqq  s_1 \cdot s_2 \cdots s_n.
\]
A \emph{factorization} of a word $w \in S^+$ is a sequence $(w_1,w_2,\ldots,w_n)$ with $n\geq 1$ such that
$w_i \in S^+$ for each $i\in[n]$, and
$w$ is the concatenation $w_1w_2 \ldots w_n$.
The following two specific types of factorizations will be of special interest for our purposes:
(1) If $n=2$, then the factorization is said to be {\em binary}.
(2) If $n\geq 2$ and $[w_1]_S = \cdots = [w_n]_S$ is an idempotent element of $S$, then
the factorization is said to be {\em unranked}.

The \emph{height} of a word $w \in S^+$ is a positive integer defined by induction on the length of $w$ as follows:
\[
	\hrank(w) = \begin{cases}
		1                                                   & \textrm{if $|w|=1$,}    \\
		1+\min_{(w_1,\ldots,w_n)}\max_{i\in[n]} \hrank(w_i) & \textrm{if $|w|\geq2$,}
	\end{cases}
\]
where the minimum ranges over all factorizations of $w$ that are binary or unranked.

\begin{theorem}[Simon's Factorization Theorem~\cite{S95,K08}] \label{th:simon-factorization}
	Let $(S,\cdot)$ be a finite semigroup.
	Then $\hrank(w) \leq 3|S|$ for every $w\in S^+$.
\end{theorem}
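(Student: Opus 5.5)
This statement is Simon's Factorization Theorem, which the paper cites from~\cite{S95,K08} rather than proving; if I had to prove it, I would follow Kufleitner's approach via Green's relations, which gives the stated bound $3|S|$.

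\emph{Step 1: Green's preorder.} Define the $\mathcal{J}$-preorder on $S$ (adjoining an identity if needed) by $s\le_{\mathcal J} t$ iff $s\in S^1tS^1$. The idea is to induct on the position of elements in this preorder, or more precisely on the number of $\mathcal J$-classes lying $\ge_{\mathcal J}$ a given element. The key classical fact I would invoke is this: if $st\mathcal{J}s$ in a finite semigroup, then $st\mathcal{R}s$; symmetrically, $ts\mathcal J s$ implies $ts\mathcal L s$. Consequently, within a regular $\mathcal J$-class the products stay inside the class only when they pass through idempotent $\mathcal H$-classes.

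\emph{Step 2: handling a single $\mathcal J$-class.} Consider a word $w=s_1\cdots s_n$ all of whose prefixes-as-products, or more precisely all of whose infix values, stay in one $\mathcal J$-class $J$.
\begin{itemize}
\item Cut $w$ at the positions where the $\mathcal R$-class of the prefix value changes. Since the prefix values stay in $J$, Step 1 shows that the $\mathcal R$-class is constant; the only change occurs when entering $J$, which is handled by binary splits.
\item The $\mathcal L$-classes of the letters' suffix contributions are handled similarly.
\item The result is that $w$ splits into a bounded number of pieces.
\item Within the main middle part, the consecutive segments between occurrences of a fixed $\mathcal L$-class evaluate to elements of one $\mathcal H$-class containing an idempotent $e$, since that $\mathcal H$-class is a group.
\end{itemize}
Grouping segments so that each evaluates to $e$ requires a further factorization inside the group $H_e$. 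For this I would use the group case, in which a word over a finite group $G$ has height at most roughly $3|G|$ via prefix-product repetition: choose the positions where the prefix product equals a given group element, so that the intermediate factors evaluate to the identity, and recurse on the number of distinct prefix values.

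\emph{Step 3: general word.} For an arbitrary word, I would look at the $\le_{\mathcal J}$-minimal value attained by its infixes, induct from the top of the $\mathcal J$-order down, and use binary splits to isolate the maximal segments that live strictly higher. The accounting is the main obstacle: the total height must come out as $\sum_J (\text{cost of } J)\le 3|S|$, where each $\mathcal J$-class contributes at most $3|J|$ levels, some from $\mathcal R$ and $\mathcal L$ changes and some from the group. Kufleitner's refinement achieves this by inducting simultaneously on $\mathcal R$-classes and then on $\mathcal L$-classes.

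Since the paper treats this as a black box, I would simply cite the result, noting that any bound depending only on $|S|$ suffices for the application.
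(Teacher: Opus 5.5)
Your proposal matches the paper, which likewise does not prove this statement but imports it (with Kufleitner's $3|S|$ bound) from Simon and Kufleitner, and you rightly note that any bound depending only on $|S|$ would suffice for the application. Your outline of Kufleitner's argument should not be presented as a proof, though: for example, the segments between consecutive occurrences of a fixed $\mathcal{L}$-class $\lambda$ land in $\mathcal{H}$-classes $\rho\cap\lambda$, where $\rho$ is the $\mathcal{R}$-class of the segment's first letter, so they need not share a single idempotent (already in a rectangular band they do not) unless you also fix the $\mathcal{R}$-class of the letter following each cut.
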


\section{Proof of main result}

\subsection{Forest decompositions}

A \emph{forest decomposition} of a graph $G$ is a pair $(F, (W_x)_{x \in V(F)})$ where $F$ is a forest and $W_x \subseteq V(G)$ for every $x\in V(F)$, with the following properties:
\begin{enumerate}
	\item for every vertex $u$ in $G$, the subgraph of $F$
	      induced by $\{x \in V(F) \mid u \in W_x\}$ is nonempty and connected, and
	      \label{item:forest-decomposition-connected}
	\item for every edge $uv$ in $G$, there exists $x\in V(F)$ such that $u,v\in W_x$.
	      \label{item:forest-decomposition-edges}
\end{enumerate}

We call the sets $W_x$ the \emph{bags} of the forest decomposition.
The \emph{width} of the forest decomposition is the maximum size of a bag, minus one.
Note that if $F$ is connected, then the forest decomposition readily gives a tree decomposition of $G$ with the same width.

A forest decomposition $(F, (W_x)_{x \in V(F)})$ of a graph $G$ is {\em suitable} if it satisfies the following two additional properties:
\begin{enumerate}
	\setcounter{enumi}{2}
	\item $F$ is a subgraph of $G$ and $V(F)=V(G)$, and
	      \label{item:forest-decomposition-spanning}
	\item for every vertex $u \in V(G)$, we have $u \in W_u$.
	      \label{item:forest-decomposition-u-in-its-own-bag}
\end{enumerate}
Observe that every graph has a suitable forest decomposition  $(F, (W_x)_{x \in V(F)})$: let $F$ be an inclusion-wise maximal acyclic subgraph of $G$, and, for each $x\in V(F)$, let the bag $W_x$ be the vertex set of the connected component of $F$ (or, equivalently, of $G$) containing $x$.
We define the {\em complexity of $G$}, denoted $\cmp(G)$, to be the minimum width of a suitable forest decomposition of $G$.
(For definiteness, we let $\cmp(G)\coloneqq 0$ when $G$ has no vertices.)

\begin{lemma}\label{lemma:connected}
Let $G$ be a connected graph and let $(F, (W_x)_{x \in V(F)})$ be a suitable forest decomposition of $G$.
Then, $F$ is connected.
\end{lemma}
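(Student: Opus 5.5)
The plan is to argue by contradiction, using only the two defining properties of a forest decomposition together with suitability property \ref{item:forest-decomposition-u-in-its-own-bag}. Suppose $F$ is disconnected. Since $V(F)=V(G)$ by \ref{item:forest-decomposition-spanning}, the components of $F$ partition $V(G)$ into at least two nonempty parts.

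Next we use that $G$ is connected. Because the vertex set of some component $C$ of $F$ is a nonempty proper subset of $V(G)$, there is an edge $uv\in E(G)$ with $u\in V(C)$ and $v\notin V(C)$. Let $C'$ be the component of $F$ containing $v$, so $C'\neq C$.

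The core step is to show that every vertex $w$ of $G$ appears only in bags indexed by nodes of the component of $F$ that contains $w$. By \ref{item:forest-decomposition-u-in-its-own-bag} we have $w\in W_w$, so the set $\{x\in V(F)\mid w\in W_x\}$ contains the node $w$. By \ref{item:forest-decomposition-connected} this set induces a connected subgraph of $F$. A connected subgraph of $F$ lies entirely within a single component of $F$, and this component must therefore be the one containing the node $w$.

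Finally, apply property \ref{item:forest-decomposition-edges} to the edge $uv$: there is a node $x$ with $u,v\in W_x$. By the core step, $x$ lies in $C$ because $u\in W_x$, and $x$ lies in $C'$ because $v\in W_x$. This contradicts $C\neq C'$, so $F$ is connected.

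No step here is a real obstacle. The only point requiring care is identifying the node $u\in V(F)$ with the vertex $u\in V(G)$, which is legitimate because $V(F)=V(G)$.
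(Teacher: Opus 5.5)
Your proof is correct and takes essentially the same route as the paper: an edge $uv$ of $G$ crossing between parts of $F$, a bag containing both endpoints, and $v\in W_v$ combined with connectivity of the set of nodes whose bags contain $v$. The only cosmetic difference is that you apply the observation that a vertex only occurs in bags of its own component of $F$ to both endpoints. The paper instead places the common bag on one side without loss of generality and derives the contradiction from the other endpoint alone.
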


\begin{proof}
Suppose for contradiction that $F$ is disconnected.
Let $\{A,B\}$ be a partition of $V(F)$ into two nonempty parts such that $F$ does not contain any $A$--$B$ edges. 
Since $G$ is connected, there exists an edge $uv\in E(G)$ such that $u\in A$ and $v\in B$.
By property~\ref{item:forest-decomposition-edges}, there exists $x\in V(F)$ such that $u,v\in W_x$.
We may assume without loss of generality that $x\in A$.
By property~\ref{item:forest-decomposition-u-in-its-own-bag} of suitable forest decompositions, $v\in W_v$.
However, this implies that the subgraph of $F$ induced by $\{z \in V(F) \mid v \in W_z\}$ contains a vertex from $A$ and a vertex from~$B$; hence, it cannot be connected, contradicting property~\ref{item:forest-decomposition-connected}. 
\end{proof}

\subsection{Intuition and proof sketch}

We first describe a naive approach to a proof of \cref{thm:main_intro}, and explain why it does not work. We then describe how Simon's Factorization Theorem helps us bypass the issues.

Observe first that if $\{A, B\}$ is a partition of $V(G)$ such that both $G[A]$ and $G[B]$ have a suitable forest decomposition of small width, and such that $N_G(B)$ is small, then we can get a suitable forest decomposition of $G$ of small width as follows. 
Start from suitable forest decompositions of small width of $G[A]$ and $G[B]$ and add as many $A$--$B$ edges of $G$ as possible between the two indexing forests so that the resulting graph is still a forest; call it $F$. 
Then, for every $v \in N_G(B)$, add $v$ to all bags $W_x$ such that $x$ is in the path in $F$ between $v$ and a neighbor of $v$ in $B$. 
It is easy to see that this yields a suitable forest decomposition of $G$ of small width. 
This is formalized in \cref{lemma:partition}.

Consider now a graph $G$ with a path decomposition $(W_1, \ldots, W_m)$ of width less than $k$. The following strategy is a natural way to try to apply this observation to obtain a suitable forest decomposition of $G$ of small width. Start from a suitable forest decomposition $F_1$ of $G[W_1]$ of small width $\omega_1$, which exists since $|W_1| \leq k$. 
Then, assuming we have a suitable forest decomposition $F_i$ of width $\omega_i$ of $G[W_1 \cup \cdots \cup W_i]$, using the above observation with ${A = W_1 \cup \cdots \cup W_i}$ and $B = W_{i+1}\setminus \bigcup_{1\le j\le i}W_j$, since $|N_G(B) \cap A| \leq |W_i| \leq k$,
we get a suitable forest decomposition $F_{i+1}$ of width at most $\omega_{i}+k$ of $G[W_1 \cup \cdots \cup W_{i+1}]$.
Observe that this strategy succeeds when $m$ is bounded.

\begin{figure}[h]
    \centering
    \includegraphics[width=\linewidth]{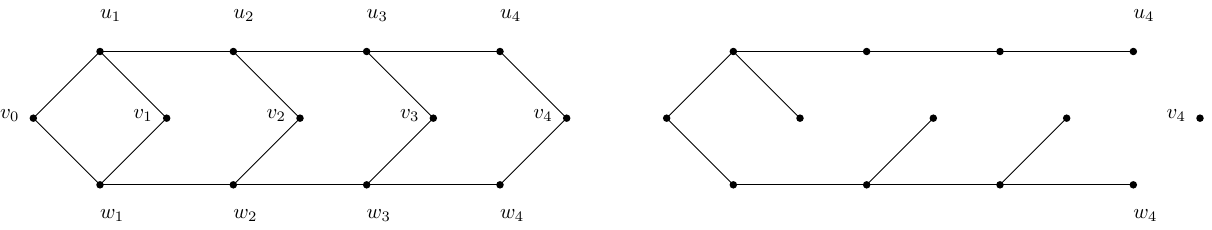}
    \caption{
    Left: The graph $G$ for $n=4$. Right: A possible forest $F_{10}$ produced by the algorithm. Observe that after connecting $v_4$ to $F_{10}$ to create $F_{11}$, either $u_4$ or $w_4$ will have to be added to the bag of $v_0$ in order to be added to the bag of $v_4$.}
    \label{fig:example}
\end{figure}

However, we cannot refine the above analysis to argue that the width of $F_i$ remains bounded, as witnessed by the following example, illustrated in \cref{fig:example}. 
Consider a positive integer $n$ and the graph $G$ with vertex set $\{v_0, \ldots, v_n\} \cup \{u_1, \ldots, u_n\} \cup \{w_1, \ldots, w_n\}$ where $u_1u_2\ldots u_n$ and $w_1w_2\ldots w_n$ form paths, $v_0$ is adjacent to $u_1$ and $w_1$, and $v_i$ is adjacent to $u_i$ and $w_i$ for every $i \in [n]$.
This graph has a minimum-width path decomposition $(W_1, \ldots, W_{3n-1})$ of width $2$ where $W_1 \coloneqq  \set{u_1, v_0, w_1}$, and for every $i \in [n-1]$, $W_{3i-1} \coloneqq  \set{u_i, v_i, w_i}$, $W_{3i} \coloneqq  \set{u_i, u_{i+1}, w_i}$, $W_{3i+1} \coloneqq  \set{u_{i+1}, w_i, w_{i+1}}$; and $W_{3n-1} \coloneqq  \{u_n, v_n, w_n\}$.
With the algorithm described above, for every $i \in [n]$ the forest $F_{3i-1}$ contains the path $u_i, u_{i-1}, \ldots, u_1, v_0, w_1, \ldots, w_{i-1}, w_i$, and every vertex $v_j$ with $j \in [i]$ is adjacent to exactly one vertex, either $u_j$ or $w_j$.
Thus, for every $i \in [n]$, one of the vertices $u_i, w_i$ has to be added to the bag of $v_0$ on its way to being added to the bag of $v_{i}$.
This proves that each forest decomposition $F_{3i-1}$ has width at least $i$, so the final forest decomposition $F_{3n-1}$ has width at least $n$.

Nevertheless, if we could guarantee that in $F_i$, any path between two vertices in $W_i$ stays within $W_i$, 
then when going from $F_i$ to $F_{i+1}$, we would only add vertices to the bags of vertices in $W_i$.
Furthermore, if every vertex appears in at most $\ell$ bags $W_i$ then the size of a given bag $W_x$ would only increase at most $\ell + 1$ times (once when $x$ is added to some $F_i$, and then once every time $x \in W_i$), and each time by at most $k$.
Therefore, the above strategy would work.
In a sense, Simon's Factorization Theorem allows us to obtain such a guarantee that any two vertices at the interface of our current subgraph of $G$ are connected ``locally''.

More precisely, Simon's Factorization Theorem can be used to show that every graph of pathwidth less than $k$ has bounded \emph{height}, where graphs of height $1$ have order at most $2k$, and graphs of height $h+1$ can be built using one of the following two operations: gluing two arbitrary graphs of height $h$ on at most $k$ vertices, or gluing a sequence of graphs $G_1, \ldots, G_n$, each of height $h$, so that this sequence may be arbitrarily long, but possesses a certain ``locality'' property. Informally, this property is that if two vertices from the ``right interface'' after having glued $G_1, \ldots, G_i$ are in the same connected component, then these two vertices were already in the same connected component in $G_i$.

We then prove by induction on $h$ that there exists a function $f(h)$ such that every graph $G$ of height $h$ has a suitable forest decomposition of width at most $f(h)$. 
The base case is simple since graphs of height $1$ have bounded order.
If $G$ has height $h+1$ and is obtained by gluing two arbitrary graphs of height $h$ on at most $k$ vertices then we simply use the first observation to prove that $G$ has a suitable forest decomposition of width at most $f(h)+k$.
If $G$ has height $h+1$ and is obtained by gluing consecutively some arbitrary number of graphs $G_1, \ldots, G_n$ of height $h$ with the ``locality'' property, then we argue that in the above process, the size of a bag only increases at most twice, and each time by at most $k$.
More precisely, we show that we can perform all these steps independently in parallel to build a suitable indexing forest $F$ for $G$ from suitable indexing forests $F_i$ for the $G_i$ (this is Claim~\ref{claim:F-is-a-forest}), and then that we can modify the bags by adding the vertices of the correct interfaces to obtain a suitable forest decomposition of $G$ of width at most $f(h)+3k$ (this is Claim~\ref{claim:suitable-forest-decompostition}).

\subsection{The proof}

We start with some definitions and lemmas.
A {\em near partition} of a set $V$ is a collection of disjoint subsets of $V$ whose union is $V$, with possibly some subsets being empty.

\begin{lemma}
	\label{lemma:partition}
	Let $G$ be a graph and let $\set{A,B}$ be a near partition of $V(G)$.
	Then
	\[
		\cmp(G) \leq \max\set{\cmp(G[A]),\cmp(G[B])} + |N_G(B)|.
	\]
\end{lemma}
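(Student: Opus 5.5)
We follow the construction sketched in the previous subsection. Let $(F_A,(W^A_x)_{x\in A})$ and $(F_B,(W^B_x)_{x\in B})$ be suitable forest decompositions of $G[A]$ and $G[B]$ of widths $\cmp(G[A])$ and $\cmp(G[B])$; if one of $A,B$ is empty the statement is trivial. Let $F$ be obtained from $F_A\cup F_B$ by adding an inclusion-wise maximal set of $A$--$B$ edges of $G$ keeping the graph acyclic. Then $F$ is a forest, $F$ is a subgraph of $G$, and $V(F)=V(G)$. Moreover, every vertex $v\in N_G(B)\subseteq A$ has a neighbor $b\in B$, and $v$ and $b$ lie in the same component of $F$: either the edge $vb$ was added, or adding it would create a cycle.

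For each $v\in N_G(B)$, fix a neighbor $b_v\in B$ and let $P_v$ be the path in $F$ from $v$ to $b_v$. Define the bags by $W_x\coloneqq W^A_x\cup\set{v\in N_G(B) \mid x\in V(P_v)}$ for $x\in A$, and $W_x\coloneqq W^B_x\cup\set{v\in N_G(B)\mid x\in V(P_v)}$ for $x\in B$. Each bag grows by at most $|N_G(B)|$, which gives the claimed width bound. Property~\ref{item:forest-decomposition-u-in-its-own-bag} holds because the original bags already had it.

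\textbf{Edges.} Edges inside $A$ or inside $B$ are covered by the old bags. For an edge $va$ with $v\in A$ and $a\in B$, we have $v\in N_G(B)$, but $b_v$ may differ from $a$. To handle this, add $v$ also along the $F$-path from $v$ to every neighbor $a\in B$ (these also lie in the same component of $F$, by the same maximality argument). Then $v$ appears in $W_a$, and so does $a$, covering the edge. The added set still consists of vertices of $N_G(B)$, so the width bound is unchanged. We therefore define the added occupancy of $v$ as the union of the paths $P_{v,a}$ over all $a\in N_G(v)\cap B$, which is a subtree of $F$ containing $v$.

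\textbf{Connectivity (the main obstacle).} Vertices of $B$ are not added anywhere, and their original occupancy lies in $F_B\subseteq F$, hence it is connected. For $v\in N_G(B)$, the new occupancy is $T_v^A\cup \bigcup_a V(P_{v,a})$, where $T_v^A$ is the connected subtree of $F_A$ containing $v$ (since $v\in W^A_v$). Each path starts at $v\in T_v^A$, so the union is connected in $F$. For $v\in A\setminus N_G(B)$ nothing changes. The only subtle point is to confirm that paths in $F$ may pass through edges of $F_A$, $F_B$, or the added edges; connectivity is witnessed in $F$ itself, which is all the definition requires. Finally, all required properties of a suitable forest decomposition are verified, which yields $\cmp(G)\le\max\set{\cmp(G[A]),\cmp(G[B])}+|N_G(B)|$.
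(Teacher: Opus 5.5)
Your proof is correct and follows essentially the paper's argument: the same forest $F=F^A\cup F^B+M$ with $M$ a maximal acyclic set of $A$--$B$ edges, and the same width accounting via bags enlarged only by vertices of $N_G(B)$. The one difference is that the paper adds each $v\in N_G(B)$ to every bag in $v$'s whole component of $F$, which makes connectivity and edge coverage immediate; your variant adds $v$ only along the union of the paths to all its $B$-neighbours, which also works.
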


\begin{proof}
	If $A$ or $B$ is empty, then $|N_G(B)|=0$ and $\cmp(G) \leq \max\set{\cmp(G[A]),\cmp(G[B])}$
	(recall that the complexity of a graph with no vertex is defined to be $0$).
	Thus, we may assume that $A$ and $B$ are nonempty.

	For each $Z\in \set{A,B}$, let $(F^Z,(W^Z_x)_{x\in Z})$ be a suitable forest decomposition of $G[Z]$ of width $\cmp(G[Z])$.
	Let $M$ be an inclusion-wise maximal subset of $A$--$B$ edges of $G$ such that $F\coloneqq F^A\cup F^B + M$ is a forest.
	For each vertex $x$ of $G$, let $C_x$ be the subset of vertices of
	$N_G(B)$ that are in the same connected component of $F$ as $x$.
	For all $x\in V(G)$, we define
	\[
		W_x \coloneqq \begin{cases}
			W^A_x \cup C_x & \quad\textrm{if $x\in A$,} \\
			W^B_x \cup C_x & \quad\textrm{if $x\in B$.}
		\end{cases}
	\]

	We claim that $(F,(W_x)_{x\in V(F)})$ is a suitable forest decomposition of $G$.
	Let $u$ be a vertex of~$G$, and let $Z\in\set{A,B}$ be such that $u\in Z$.
	Since $(F^Z,(W^Z_x)_{x\in Z})$ is a forest decomposition of~$G[Z]$, we know that
	$F^Z[\set{x \in V(F^Z) \mid u\in W^Z_x}]$ is a nonempty connected subgraph of $F^Z$.
	Now, if $u \not\in N_G(B)$, then
	$\set{x \in V(F) \mid u\in W_x}=\set{x \in V(F^Z)\mid u\in W^Z_x}$ and since $F^Z\subseteq F$, this proves property~\ref{item:forest-decomposition-connected}.
	If $u\in N_G(B)$, let $Y_u$ be the set of vertices that are in the same connected component of $F$ as $u$. Then by construction, we have
	\[\set{x \in V(F)\mid u\in W_x}=\set{x \in V(F^A)\mid u\in W^A_x}\cup Y_u.\]
	Each of these two sets induces a connected subgraph of $F$ containing~$u$.
	Thus, $\set{x\in V(F^A)\mid u\in W^A_x}\cup Y_u$ induces a connected subgraph of $F$, which completes the proof of property~\ref{item:forest-decomposition-connected}.

	In order to prove property~\ref{item:forest-decomposition-edges}, consider an arbitrary edge $uv$ in $G$.
	First assume that $uv$ is an edge of $G[Z]$, for some $Z\in\set{A,B}$.
	Since $(F^Z,(W^Z_x)_{x\in Z})$ is a forest decomposition of $G[Z]$, there is a vertex $x\in Z$ with $u,v \in W^Z_x$. 
    Since $W^Z_x\subseteq W_x$, property~\ref{item:forest-decomposition-edges} holds in this case.
	It remains to consider the case when $u\in A$ and $v\in B$ (up to symmetry).
	This implies that $u\in N_G(B)$ and that
	$u$ and $v$ lie in the same connected component of $G$. By the choice of $M$, it follows that $u$ and $v$ lie in the same connected component of $F$.
	Therefore, $u\in C_v \subseteq W_v$.
	Since also $v\in W_v^A\subseteq W_v$, this completes the proof of property~\ref{item:forest-decomposition-edges}.

	By construction $V(F)=A\cup B = V(G)$ and $F$ is a subgraph of $G$, so property~\ref{item:forest-decomposition-spanning} holds.
	Also, for each $Z\in\set{A,B}$ and $u\in Z$, we have $u\in W^Z_u$
	since $(F^Z,(W^Z_x)_{x\in Z})$ is suitable, and thus $u \in W^Z_u\subseteq W_u$.
	Thus property~\ref{item:forest-decomposition-u-in-its-own-bag} holds as well.
	This completes the proof that $(F,(W_x)_{x\in V(F)})$ is a suitable forest decomposition of $G$.

	Note that $|W_x| \leq \max\set{\cmp(G[A]),\cmp(G[B])} + |N_G(B)| + 1$ for each $x\in V(F)$.
	This completes the proof of the lemma.
\end{proof}

A consequence of the above lemma is that adding one vertex to a graph increases its complexity by at most~$1$. 
More generally:

\begin{lemma}
	\label{lemma:add_vertices}
	Let $G$ be a graph and let $X$ be a subset of vertices of $G$. Then
	\[\cmp(G) \leq \cmp(G-X) + |X|.\]
\end{lemma}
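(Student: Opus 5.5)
We apply \cref{lemma:partition} directly with the near partition $\set{A,B}$ of $V(G)$ given by $A \coloneqq X$ and $B \coloneqq V(G)\setminus X$.
Since $N_G(B)$ consists of vertices outside $B$, it is contained in $A = X$, so $|N_G(B)| \leq |X|$.
\cref{lemma:partition} then gives
\[
	\cmp(G) \leq \max\set{\cmp(G[X]),\cmp(G-X)} + |X|.
\]

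It therefore remains to handle the term $\cmp(G[X])$. We have $\cmp(G[X]) \leq \max\set{0,|X|-1}$, since the trivial suitable forest decomposition from the text (a maximal acyclic subgraph, with each bag being the vertex set of the component) has all bags of size at most $|X|$. This is weaker than needed, because $\max\set{\cmp(G[X]),\cmp(G-X)}$ could be as large as $|X|-1$ even when $\cmp(G-X)$ is small.

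To avoid this, we would instead argue by induction on $|X|$, removing one vertex at a time. The case $X=\emptyset$ is trivial. For $X=\set{v}$, apply \cref{lemma:partition} with $A=\set{v}$ and $B=V(G)\setminus\set{v}$. Here $\cmp(G[\set{v}])=0$ (a single vertex with bag $\set{v}$ has width $0$), and $|N_G(B)|\leq 1$, so
\[
	\cmp(G) \leq \max\set{0,\cmp(G-v)} + 1 = \cmp(G-v) + 1.
\]
For general $X$, pick $v\in X$ and chain the single-vertex bound with the induction hypothesis applied to $G-v$ and $X\setminus\set{v}$:
\[
	\cmp(G) \leq \cmp(G-v)+1 \leq \cmp(G-X) + |X\setminus\set{v}| + 1 = \cmp(G-X)+|X|.
\]

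There is no real obstacle here. The only point needing care is the one above: doing all of $X$ at once is lossy through $\cmp(G[X])$, and the one-vertex-at-a-time induction avoids this. One should also check the degenerate cases where $G-v$ or $G$ is empty, which the convention $\cmp(\cdot)=0$ for the empty graph and the first case in the proof of \cref{lemma:partition} already cover.
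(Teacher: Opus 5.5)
Your proof is correct, but it reaches the bound by a different mechanism than the paper.

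The paper also takes $A \coloneqq X$ and $B \coloneqq V(G)-X$. Instead of using the \emph{statement} of \cref{lemma:partition}, it inspects the construction inside its proof. For $x\in A$ the bag there is $W^A_x\cup C_x$, and both pieces lie in $A$ because $C_x\subseteq N_G(B)\subseteq A$. So bags at vertices of $A$ have size at most $|X|$, while bags at $x\in B$ have size at most $\cmp(G-X)+|X|+1$. In other words, the paper disposes of the lossy $\cmp(G[X])$ term you identified through an observation about that specific decomposition, using a single application.

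You instead use \cref{lemma:partition} purely as a black box, $|X|$ times with a singleton $A$. There $\cmp(G[A])=0$ and $\cmp\geq 0$ make the maximum harmless.

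Your route costs an induction but depends only on the lemma as stated, so it would survive any change to its internal construction. The paper's route is shorter but relies on the property $W_x\subseteq A$ for $x\in A$, which is not recorded in the statement of \cref{lemma:partition}.
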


\begin{proof}
	Let $A \coloneqq X$ and $B\coloneqq V(G) - X$.
	Then, $\set{A,B}$ is a near partition of $V(G)$.
	The proof of \cref{lemma:partition} constructs a suitable forest decomposition $(F, (W_x)_{x \in V(F)})$ of $G$ such that $W_x \subseteq A$ for every $x \in A$, and $|W_x| \leq \cmp(G[B]) + |N_G(B)| + 1 \leq \cmp(G-X) + |X| + 1$ for every $x \in B$.
	This readily gives a suitable forest decomposition of $G$ of width at most $\cmp(G-X) + |X|$.
\end{proof}

Before proceeding to our main technical result, we need one more observation regarding $k$-interface graphs and their abstractions.

Let $k$ be a positive integer and let $w\in \mathcal{A}_k^+$; that is, $w=A_1A_2\ldots A_n$ with $A_i\in \mathcal{A}_k$ for each $i\in [n]$.
We define
\begin{align*}
	I_w & \coloneqq  A_1 \oplus  A_2 \oplus \cdots \oplus A_n.
\end{align*}
Note that $I_w \in \mathcal{I}_k$.

\begin{lemma}\label{lem:abst-Iw-is-w}
	Let $k$ be a positive integer and let $w\in \mathcal{A}_k^+$. Then, $\abst{I_w} = [w]_{\mathcal{A}_k}$.
\end{lemma}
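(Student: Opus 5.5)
The plan is to argue by induction on the length $n=|w|$, using that $\abst{\cdot}$ is a semigroup homomorphism from $(\mathcal{I}_k,\oplus)$ to $(\mathcal{A}_k,\boxplus)$ (proved in Appendix~\ref{app:proof-morphism}) and that $\abst{A}=A$ for every $A\in\mathcal{A}_k$.

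For the base case $n=1$, we have $w=A_1$ and $I_w=A_1$, viewed as an element of $\mathcal{I}_k$ since $\mathcal{A}_k\subseteq\mathcal{I}_k$. As observed after the definition of abstraction, $\abst{A_1}=A_1$, and $A_1=[w]_{\mathcal{A}_k}$ by definition of the evaluation function.

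For the inductive step, let $n\ge 2$ and write $w=w'A_n$ with $w'=A_1\ldots A_{n-1}$. By associativity of $\oplus$ on $\mathcal{I}_k$, we have $I_w=I_{w'}\oplus A_n$. Applying the homomorphism property gives
\[
\abst{I_w}=\abst{I_{w'}}\boxplus\abst{A_n}.
\]
The induction hypothesis gives $\abst{I_{w'}}=[w']_{\mathcal{A}_k}$, and $\abst{A_n}=A_n$. Hence
\[
\abst{I_w}=[w']_{\mathcal{A}_k}\boxplus A_n=[w]_{\mathcal{A}_k},
\]
where the last equality uses the definition of evaluation and the associativity of $\boxplus$.

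I expect no genuine obstacle: all the difficulty is in the homomorphism property, which the paper defers to the appendix and which I take as given. The one point needing care is that $\abst{\cdot}$ is applied to the elements $A_i$ regarded as members of $\mathcal{I}_k$, and that it fixes each of them. This is exactly the identity $\abst{A}=A$ recorded earlier.
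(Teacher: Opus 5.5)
Your proposal is correct and follows essentially the same argument as the paper: both apply the homomorphism property of $\abst{\cdot}$ and then the identity $\abst{A}=A$ for $A\in\mathcal{A}_k$. The only difference is that the paper writes this as a single chain of equalities over all $n$ factors, whereas your induction on $|w|$ makes the extension of the homomorphism property to $n$ factors explicit.
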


\begin{proof}
	Write $w=A_1A_2\ldots A_n$ with $A_i\in \mathcal{A}_k$ for each $i\in [n]$. Then,
	\begin{align*}
		\abst{I_w} & = \abst{A_1 \oplus  A_2 \oplus \cdots \oplus A_n}                                                                                   \\
		           & = \abst{A_1} \boxplus \abst{A_2} \boxplus \cdots \boxplus \abst{A_n} &  & \textrm{since $\abst{\cdot}$ is a semigroup homomorphism} \\
		           & = A_1 \boxplus A_2 \boxplus \cdots \boxplus A_n &  & \textrm{since $\abst{A} = A$ for all $A\in  \mathcal{A}_k$} \\
                   & = [w]_{\mathcal{A}_k}.                                               &  & \qedhere
	\end{align*}
\end{proof}

The following lemma is our main technical tool.

\begin{lemma}
	\label{lem:technical}
	Let $k$ be a positive integer and let $w\in \mathcal{A}_k^+$.
	For each representative $\mathbb{G}=(G, \phi, L, R)$ of $I_w$, and each $X\subseteq L$, we have
	\[
		\cmp(G-X) \leq 3k\cdot\hrank(w) -1.
	\]
\end{lemma}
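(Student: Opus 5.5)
We argue by induction on $\hrank(w)$, following the definition of height: either $|w|=1$, or $w$ has a binary factorization $(w_1,w_2)$ with $\hrank(w_i)\le \hrank(w)-1$, or an unranked factorization $(w_1,\dots,w_n)$ with the same bound and $[w_1]_{\mathcal{A}_k}=\dots=[w_n]_{\mathcal{A}_k}=E$ idempotent. In the latter two cases we fix a representative $\mathbb{G}$ of $I_w$. Since $I_w=I_{w_1}\oplus\cdots\oplus I_{w_n}$, we can split $\mathbb{G}$ along $\oplus$ into a compatible sequence $\mathbb{G}_1,\dots,\mathbb{G}_n$ with $\mathbb{G}_i$ a representative of $I_{w_i}$ and $\mathbb{G}=\mathbb{G}_1\oplus\cdots\oplus\mathbb{G}_n$. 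This splitting is done by realizing the decomposition on the vertex set of $G$ and transporting it via the isomorphism.

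\textbf{Base case.} If $|w|=1$, then $I_w=A_1$ is basic, so $|V(G)|\le 2k$. A single-vertex-per-component bag construction gives $\cmp(G-X)\le |V(G)|-1\le 2k-1\le 3k-1$.

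\textbf{Binary case.} Take $\mathbb{G}=\mathbb{G}_1\oplus\mathbb{G}_2$. Put $B=V(G_2)-(R_1\cap L_2)$ and $A=V(G)-X-B$. By \cref{lemma:basic}\ref{item:basic-separator}, $N_{G-X}(B)\subseteq R_1\cap L_2$, which has size at most $k$. Then $(G-X)[A]=G_1-X'$ with $X'=X\subseteq L_1$, so induction applies to it. The graph $(G-X)[B]$ is $G_2-(R_1\cap L_2)-X$; here we use $X\cap V(G_2)\subseteq L_2$ (as $X\subseteq L_1$ and by compatibility) and that $R_1\cap L_2\subseteq L_2$, so induction on $w_2$ with removed set $(R_1\cap L_2)\cup (X\cap V(G_2))\subseteq L_2$ applies. \cref{lemma:partition} then gives $\cmp(G-X)\le 3k(\hrank(w)-1)-1+k$, which is within the bound.

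\textbf{Unranked case.} This is the main obstacle, since we may only spend about $3k$ extra width no matter how large $n$ is. Let $S_i=R_i\cap L_{i+1}$ and $H_i=G_i-S_{i-1}$ (with $S_0=X$), so the $V(H_i)$ partition $V(G)-X$. By induction with removed set $S_{i-1}\subseteq L_i$, each $H_i$ has a suitable forest decomposition $(F_i,\cdot)$ of width at most $3k(h-1)-1$. We then form $F$ as in the proof of \cref{lemma:partition}: take the union of all $F_i$ and add a maximal set of edges between consecutive parts that keeps it a forest. For each $x\in V(H_i)$ we enlarge the bag $W_x$ by the vertices of $S_{i-1}\cup S_i$ that lie in the same component of $F$ as $x$, which costs at most $2k$.

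The key claim, where idempotency enters, is a locality property. Let $\mathbb{P}_i=\mathbb{G}_1\oplus\cdots\oplus\mathbb{G}_i$; by \cref{lem:abst-Iw-is-w} and the homomorphism property, $\abst{\mathbb{P}_i}=E^i=E=\abst{\mathbb{G}_i}$. Hence two vertices of $R_i$ are connected by a path in $G$ through earlier parts only if they are already joined by a path within $G_i$ (the torsos agree on $L\cup R$ with equal labels). It follows that the components of $F$ restricted to the interfaces behave locally: the connection needed to carry $v\in S_i$ to a neighbour can be routed inside $H_i\cup H_{i+1}$, rather than through far-away parts as in the counterexample of \cref{fig:example}. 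To make this work I would choose the added edges carefully, for example by processing left to right and only adding edges that merge components not already merged through $E$-locality. I would then verify two things: that $F$ is a forest, and that each vertex of $S_i$ only needs to be added to bags of vertices in $H_i\cup H_{i+1}$. Bounding the extra cost by $3k$ (at most $2k$ from the interfaces, plus $k$ for handling $X$ and $L_1$) then gives width at most $3k\cdot\hrank(w)-1$.
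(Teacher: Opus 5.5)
\textbf{There is a genuine gap in the unranked case.} Your base case and binary case are fine and agree with the paper's. The unranked case is the heart of the lemma, and there you give a plan rather than a proof. The choice of inter-part edges is never fixed: first it is a maximal forest-preserving set, then an unspecified ``careful'' left-to-right rule. The two facts you say you would verify are exactly what must be proved: that $F$ is a forest, and that each $u\in S_i$ need only enter bags in $H_i\cup H_{i+1}$ while its bag-set stays connected. With one globally maximal edge set, acyclicity is free, but nothing forces the $F$-path from $u$ to the vertices whose bags need $u$ to stay inside $H_i\cup H_{i+1}$. That is precisely the failure in \cref{fig:example}.

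The paper takes each $M_i$ maximal only subject to $F_i\cup F_{i+1}+M_i$ being a forest, and enlarges bags using the \emph{local} components $C_{i,i+1}(x)$. Bag connectivity and edge coverage are then immediate, and all the work goes into showing that $F=\bigcup_i F_i+\bigcup_i M_i$ is acyclic. Take a cycle $C$ minimising the largest index $i$ of a part $V_i$ it meets. It contains a subpath $P$ that avoids $V_i$, meets $V_1\cup\dots\cup V_{i-2}$, has endpoints $u,v\in X_i\subseteq R_{i-1}$, and has no internal vertex in $R_{i-1}$. Since $P$ avoids $L_1$, it witnesses the edge $uv$ of $\torso(G'_{i-1},L_1\cup R_{i-1})$. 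As $\abst{\mathbb{G}'_{i-1}}=A=\abst{\mathbb{G}_{i-1}}$ and $\phi$ is injective on $R_{i-1}$, $uv$ is also an edge of $\torso(G_{i-1},L_{i-1}\cup R_{i-1})$. So $u$ and $v$ are joined inside $V_{i-1}$, hence in $F_{i-1}$, which yields a cycle of smaller index.

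This argument needs two ingredients your setup lacks.

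\emph{Deleting all of $L_1$.} The paper proves $\cmp(G-L_1)\le 3kh-k-1$ and then restores $L_1-X$ by \cref{lemma:add_vertices}; that is where the third $k$ is spent. If $L_1-X$ stays in $H_1$, as with your $S_0=X$, a path through a vertex of $L_1$ witnesses no torso edge between its ends, so idempotency gives no locality. Your informal locality statement omits exactly these hypotheses: the path must avoid $L_1$ and have no internal vertex in $R_i$. The conclusion is then a path with no internal vertex in $L_i\cup R_i$.

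\emph{The persistence claim.} If $v\in L_i\cap R_i$, then $v\in L_j\cap R_j$ for every $j$. Equal abstractions give, for the label $\phi(v)$, a vertex of $L_j\cap R_j$ for each $j$, and compatibility of consecutive pieces forces these vertices to coincide. Once $L_1$ is deleted, every remaining vertex lies in at most two consecutive $G_j$'s. Hence $X_i-L_1\subseteq V_{i-1}$ and every edge of $G-L_1$ lies within some $V_i\cup V_{i+1}$. These facts are used in the cycle argument (a cycle only passes between consecutive parts) and in the edge-coverage check.

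Without these ingredients, your closing accounting of ``$2k$ from the interfaces plus $k$ for $X$ and $L_1$'' is not backed by any construction.
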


Before proving \cref{lem:technical}, let us show that it implies our main theorem, which we restate here for convenience.

\restateMain*

\begin{proof}
	We prove the theorem with the function $f:\mathbb{N^+}\to\mathbb{N}$ defined as
	\[
		f(k)\coloneqq  9k\cdot|\mathcal{A}_k|-1.
	\]
	Let $G$ be a connected graph of pathwidth less than $k$ and let $(W_i)_{i\in[m]}$ be a nice path decomposition of $G$ with $|W_i|\leq k$ for each $i\in[m]$.
	Consider a supergraph $G^+$ of $G$ such that $V(G^+)=V(G)$ and $uv$ is an edge in $G^+$ if and only if there is $i\in[m]$ with $u,v\in W_i$.
	Since $(W_i)_{i\in[m]}$ is a path decomposition of $G^+$ such that each bag is a clique, $G^+$ is an interval graph (see~\cite{zbMATH03214396}). 
    Furthermore, the clique number of $G^+$ is bounded by $k$.
	Since interval graphs are perfect, there exists a proper vertex coloring $\phi$ of $G^+$ with at most $k$ colors, say $\phi: V(G)\to[k]$. 
    Observe that $\phi$ is injective on $W_i$ for each $i\in[m]$.

	For each $i\in[m]$, we define
	\[
		G_i \coloneqq G[W_i],\quad
		L_i \coloneqq \begin{cases}
			\emptyset & \textrm{if $i=1$,} \\
			W_i       & \textrm{if $i>1$,}
		\end{cases}
		\quad
		R_i \coloneqq \begin{cases}
			W_i       & \textrm{if $i<m$,} \\
			\emptyset & \textrm{if $i=m$.}
		\end{cases}
	\]
	Note that $\mathbb{G}_i=(G_i,\phi_{|W_i},L_i,R_i)$ is a basic $k$-interface graph for each $i\in[m]$.
	Using the fact that the path decomposition  $(W_i)_{i\in[m]}$ is nice, it can be checked that $\mathbb{G}_i$ and $\mathbb{G}_{i+1}$ are compatible for each $i\in[m-1]$.
	(Let us point out that this could not be guaranteed if the path decomposition were not nice, as then there could be vertices $v\in W_i$ and $w\in W_{i+1}$ with $\phi(v)=\phi(w)$ but $v\neq w$.)
	More generally, it can be checked that the sequence $\mathbb{G}_1, \mathbb{G}_2, \ldots, \mathbb{G}_m$ is compatible.
	Moreover,
	\[
		(G,\phi,\emptyset,\emptyset) = \mathbb{G}_1 \oplus \mathbb{G}_2 \oplus \cdots \oplus \mathbb{G}_m.
	\]
	Let $A_i\coloneqq \abst{\mathbb{G}_i}$ for each $i\in[m]$.
	Consider the word $w\coloneqq A_1A_2\ldots A_m$ in $\mathcal{A}_k^+$.
	Observe that $(G,\phi,\emptyset,\emptyset)$ is a representative for $I_w$.
	Now, by~\cref{lem:technical} (with $X=\emptyset$) and Simon's Factorization Theorem,
	\[
		\cmp(G) \leq 3k\cdot\hrank(w)-1 \leq 3k\cdot 3 |\mathcal{A}_k|-1 = f(k).
	\]
	Therefore, $G$ has a suitable forest decomposition of width at most $f(k)$, which must be a tree decomposition by \cref{lemma:connected} due to the assumption that $G$ is connected.
	This completes the proof of~\cref{thm:main_intro}.
\end{proof}

It remains to prove~\cref{lem:technical}.

\begin{proof}[Proof of~\cref{lem:technical}]
	We prove that the lemma holds for some representative $\mathbb{G}=(G, \phi, L, R)$ of $I_w$ and for every subset $X\subseteq L$.
	By isomorphism of $k$-interface graphs, it will then follow that the lemma holds for every $\mathbb{G}$ and every $X$.

	The proof goes by induction on $\hrank(w)$.
	Suppose first that $\hrank(w)=1$, so $|w|=1$.
	Let $\mathbb{G}=(G, \phi, L, R)$ be a representative of $I_w$ and let $X\subseteq L$.
	Then we have $|V(G)|\leq 2k$.
	In this case, let $F$ be a maximal acyclic subgraph of $G-X$. 
    In particular, $F$ is a forest, and induces a spanning tree on every connected component of $G-X$. For every vertex $x \in V(G - X)$, let $W_x$ be the set of vertices in the same connected component as $x$ in $G - X$ (hence in $F$).
    Then, $(F,(W_x)_{x\in V(F)})$ is a suitable forest decomposition of $G - X$ of width at most $2k-1$.

	Suppose now that $\hrank(w)=h\geq2$.
	Let $(w_1,\ldots,w_n)$ be a factorization of $w$ witnessing that $\hrank(w)=h$.
	The proof splits into two cases.

	Suppose first that the factorization is binary, so $n=2$, $w=w_1w_2$, and $\hrank(w_i)\leq h-1$ for each $i\in[2]$.
	Thus, we have
	\[
		I_w = I_{w_1} \oplus I_{w_2}.
	\]
	Let $\mathbb{G}_1=(G_1,\phi_1,L_1,R_1)$, $\mathbb{G}_2=(G_2,\phi_2,L_2,R_2)$ be compatible representatives of $I_{w_1}$ and  $I_{w_2}$, respectively.
	Let $\mathbb{G}\coloneqq\mathbb{G}_1\oplus \mathbb{G}_2$ and $\mathbb{G}\eqqcolon(G,\phi,L,R$).
	Observe that $\mathbb{G}$ is a representative of $I_w$, and that $L=L_1$ and $R=R_2$ by construction.
	We prove that the lemma holds for this representative. 
    Let thus $X\subseteq L=L_1$, and our goal is to bound $\cmp(G-X)$.

	Define
	\[
		X_1\coloneqq  X, \quad X_2 \coloneqq  R_1 \cap L_2.
	\]
	Since $\mathbb{G}_1$ and $\mathbb{G}_2$ are compatible, we have $V(G_1) \cap V(G_2) = R_1 \cap L_2 = X_2$.
	It follows that
	\[
		\set{V(G_1)-X_1, V(G_2)-X_2}
	\]
	is a near partition of the vertex set of $G-X$ (as $X_1\cap (V(G_2)-X_2)=\emptyset$).

	We claim that
	\begin{equation}
		N_{G-X}(V(G_2) - X_2) \subseteq X_2.
		\label{eq:binary-case-claim}
	\end{equation}
	Consider an arbitrary vertex $u\in N_{G-X}(V(G_2) - X_2)$.
	Let $v$ be a neighbor of $u$ such that $v\in V(G_2) - X_2$.
	Since $v\not\in X_2$, we conclude that $v\not\in V(G_1)\cap V(G_2)$.
	It follows that $v\not\in V(G_1)$.
	However, recall that by~\cref{lemma:basic}.\ref{item:basic:surjection-edges}, $uv \in E(G_1)\cup E(G_2)$.
	Since $v\not\in V(G_1)$, we obtain that $uv\in E(G_2)$
	and in particular $u\in V(G_2)$.
    Furthermore, since \[N_{G-X}(V(G_2) - X_2) \subseteq V(G-X)\setminus (V(G_2) - X_2)\subseteq V(G_1)\,,\] we obtain that $u\in V(G_1)$.
	Thus, $u\in V(G_1) \cap V(G_2) = X_2$.

	By induction,
	\begin{equation}
		\cmp(G_i-X_i) \leq 3k\cdot\hrank(w_i) -1 \leq 3k(h-1) -1
		\label{eq:induction-call}
	\end{equation}
	for each $i\in [2]$.

	Using the above-mentioned near partition $\set{V(G_1)-X_1, V(G_2)-X_2}$  of the vertex set of $G-X$, we obtain
	\begin{align*}
		\cmp(G-X)
		 & \leq \max_{i\in[2]}\cmp(G_i-X_i) + |N_{G-X}(V(G_2)-X_2)| &  & \textrm{by~\cref{lemma:partition}}                                     \\
		 & \leq 3k(h-1) -1 + |X_2|                                                   &  & \textrm{by~\eqref{eq:induction-call} and~\eqref{eq:binary-case-claim}} \\
		 & \leq 3k(h-1) -1 + |L_2|                                                   &  & \textrm{since $X_2\subseteq L_2$}                                      \\
		 & \leq  3k(h-1) -1 + k                                                                                                                                  \\
		 & \leq 3kh -1.
	\end{align*}
	This completes the proof of the case of binary factorization.

	Suppose now that the factorization is unranked, so $n\geq2$, $w=w_1w_2\ldots w_n$, $\hrank(w_i)\leq h-1$
	for each $i\in[n]$, and
	$[w_1]_{\mathcal{A}_k}=\cdots=[w_n]_{\mathcal{A}_k}=A$, where $A\in \mathcal{A}_k$ and $A\boxplus A=A$.
	Note that
	\[
		I_w = I_{w_1} \oplus \cdots \oplus I_{w_n}.
	\]
	Choose a representative $\mathbb{G}_i=(G_i,\phi_i,L_i,R_i)$ of $I_{w_i}$ for each $i\in[n]$ in such a way that the sequence $(\mathbb{G}_1,\ldots, \mathbb{G}_n)$ is compatible.
	Let $\mathbb{G}\coloneqq\mathbb{G}_1\oplus\cdots\oplus \mathbb{G}_n$ and $\mathbb{G}\eqqcolon(G,\phi,L,R)$.
	Observe that $\mathbb{G}$ is a representative of $I_w$, and that by \cref{lemma:basic}.\ref{basic:item:L-R-phi}, we have
	\[
		L = L_1,\quad R = R_n,\quad\textrm{and}\quad \phi|_{V(G_i)} = \phi_i \quad \textrm{for each } i\in[n].
	\]

	For every $i \in [n]$, let $\mathbb{G}'_i \coloneqq \mathbb{G}_1 \oplus \cdots \oplus \mathbb{G}_i$.
	The following claim is the crucial place where we use the idempotence of $A$.

	\begin{claim}\label{cl:all-same-abstraction}
		For every $i \in [n]$, we have $\abst{\mathbb{G}_i} = A$ and $\abst{\mathbb{G}'_i} = A$.
	\end{claim}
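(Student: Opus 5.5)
The first equality will follow from \cref{lem:abst-Iw-is-w} and the homomorphism property. The second will then follow by induction on $i$, using the idempotence $A\boxplus A=A$.

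\emph{First equality.} Fix $i\in[n]$. By construction, $\mathbb{G}_i$ is a representative of $I_{w_i}$. The abstraction of a $k$-interface graph depends only on its isomorphism class, so $\abst{\mathbb{G}_i}=\abst{I_{w_i}}$. By \cref{lem:abst-Iw-is-w}, $\abst{I_{w_i}}=[w_i]_{\mathcal{A}_k}$, and this equals $A$ by the choice of the unranked factorization. Hence $\abst{\mathbb{G}_i}=A$.

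\emph{Second equality.} I argue by induction on $i$. For $i=1$ we have $\mathbb{G}'_1=\mathbb{G}_1$, which is the first equality. For $i\ge 2$, write $\mathbb{G}'_i=\mathbb{G}'_{i-1}\oplus\mathbb{G}_i$. This is a legitimate binary gluing: since $(\mathbb{G}_1,\dots,\mathbb{G}_n)$ is compatible, associativity of $\oplus$ on compatible sequences (the observation preceding \cref{lemma:basic}) shows that $\mathbb{G}'_{i-1}$ and $\mathbb{G}_i$ are compatible and that their gluing equals $\mathbb{G}_1\oplus\cdots\oplus\mathbb{G}_i$. Consequently the isomorphism class of $\mathbb{G}'_i$ is the $\oplus$-product of the classes of $\mathbb{G}'_{i-1}$ and $\mathbb{G}_i$. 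Since $\abst{\cdot}:(\mathcal{I}_k,\oplus)\to(\mathcal{A}_k,\boxplus)$ is a semigroup homomorphism (Appendix~\ref{app:proof-morphism}), we get
\[
\abst{\mathbb{G}'_i}=\abst{\mathbb{G}'_{i-1}}\boxplus\abst{\mathbb{G}_i}=A\boxplus A=A,
\]
using the induction hypothesis, the first equality, and idempotence of $A$.

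There is a shortcut as well: $\mathbb{G}'_i$ is a representative of $I_{w_1w_2\cdots w_i}$, so \cref{lem:abst-Iw-is-w} gives $\abst{\mathbb{G}'_i}=[w_1\cdots w_i]_{\mathcal{A}_k}=[w_1]\boxplus\cdots\boxplus[w_i]=A$. Either route works.

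The only point needing care is the bookkeeping step that $\mathbb{G}'_{i-1}$ and $\mathbb{G}_i$ are compatible and that their gluing represents the $\oplus$-product of the corresponding classes. This follows from the associativity discussion for compatible sequences, applied to the prefix $\mathbb{G}_1,\dots,\mathbb{G}_i$, which is itself compatible. Everything else is formal.
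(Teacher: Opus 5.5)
Your proof is correct and follows essentially the paper's argument: the first equality via \cref{lem:abst-Iw-is-w}, and the second via the homomorphism property of $\abst{\cdot}$ together with the idempotence $A\boxplus A=A$. The only difference is that the paper applies the homomorphism to the whole product $I_{w_1}\oplus\cdots\oplus I_{w_i}$ in one step, whereas you induct on $i$; the two arguments amount to the same thing.
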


	\begin{proof}
		First, for every $i \in [n]$, using \cref{lem:abst-Iw-is-w} for the second equality, we have \[\abst{\mathbb{G}_i} = \abst{I_{w_i}} = [w_i]_{\mathcal{A}_k} = A.\]
		Then, for every $i \in [n]$, we also have \begin{align*}
			\abst{\mathbb{G}'_i} & = \abst{\mathbb{G}_1 \oplus \cdots \oplus \mathbb{G}_i}                                                                                 \\
			                     & = \abst{I_{w_1} \oplus \cdots \oplus I_{w_i}}                          &  & \textrm{by definition of $\abst{\cdot}$ on $\mathcal{I}_k$} \\
			                     & = \abst{I_{w_1}} \boxplus \cdots \boxplus \abst{I_{w_i}}               &  & \textrm{since $\abst{\cdot}$ is a semigroup homomorphism}   \\
			                     & = [w_1]_{\mathcal{A}_k} \boxplus \cdots \boxplus [w_i]_{\mathcal{A}_k} &  & \textrm{by \cref{lem:abst-Iw-is-w}}                         \\
			                     & = A \boxplus \cdots \boxplus A                                                                                                          \\
			                     & = A                                                                    &  & \textrm{by }A\boxplus A=A.\qedhere
		\end{align*}
	\end{proof}

	Write $\mathbb{G}'_i = (G'_i, \phi'_i, L'_i, R'_i)$.
	Informally, we will use \cref{cl:all-same-abstraction} to argue that if there exists a path in $G'_i$ between two vertices of $R'_i = R_i$, whose interior does not intersect $L'_i \cup R'_i = L_1 \cup R_i$, then there also exists a path in $G_i$ between the same two vertices, whose interior does not intersect $L_i \cup R_i$.
	This is the ``locality'' condition we evoked in the proof overview, and which we will use crucially in Claim~\ref{claim:F-is-a-forest}.

	We prove that \cref{lem:technical} holds for the representative $\mathbb{G}$ of $I_w$. 
    Let thus $X\subseteq L_1$, and our goal is to bound $\cmp(G-X)$.
	Here, we follow a slightly different strategy:
	Instead of directly showing that $\cmp(G-X) \leq 3kh-1$, we will show instead that
	\begin{equation}
		\cmp(G-L_1) \leq 3kh - k -1.
		\label{eq:G-L1}
	\end{equation}
	This is enough, because once \eqref{eq:G-L1} is established, it will then follow from \cref{lemma:add_vertices} that
	\[
		\cmp(G-X) \leq \cmp(G-L_1) + |L_1-X| \leq (3kh - k -1) + |L_1-X| \leq 3kh -1.
	\]
	Thus, it remains to prove \eqref{eq:G-L1}, which we do now.

	We start with a claim that relies on the fact that all $\mathbb{G}_i$'s have the same abstraction.
	\begin{claim}
		\label{claim:idempotent-case-persistent-elements}
		Let $i\in[n]$ and let $v$ be a vertex of $G$ such that $v\in L_i\cap R_i$.
		Then $v\in L_j\cap R_j$ for all $j\in[n]$.
	\end{claim}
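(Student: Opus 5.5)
The plan is to combine two facts: all $\mathbb{G}_j$ have the same abstraction $A$ (\cref{cl:all-same-abstraction}), and consecutive members of a compatible sequence are glued along vertices sharing a label. The first fact gives, for every $j$, some vertex of $L_j\cap R_j$ carrying the label $\phi(v)$. The second fact forces all these vertices to coincide.

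\emph{Step 1: the label set of $L_j\cap R_j$ does not depend on $j$.} By \cref{cl:all-same-abstraction}, $\abst{\mathbb{G}_j}=A$ for every $j\in[n]$. Hence the basic $k$-interface graphs $(\torso(G_j,L_j\cup R_j),\phi_j|_{L_j\cup R_j},L_j,R_j)$ are pairwise isomorphic. An isomorphism of $k$-interface graphs preserves labels and membership in the left and right sets. Therefore $\phi_j(L_j)$, $\phi_j(R_j)$ and $\phi_j(L_j\cap R_j)$ are the same three subsets of $[k]$ for all $j$. Set $c\coloneqq\phi(v)=\phi_i(v)$. Since $v\in L_i\cap R_i$, we get $c\in\phi_j(L_j\cap R_j)$ for every $j$. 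So for each $j$ there is a vertex $v_j\in L_j\cap R_j$ with $\phi_j(v_j)=c$. This $v_j$ is unique among vertices of $L_j$, and also among vertices of $R_j$, because $\phi_j$ is injective on each of $L_j$ and $R_j$. In particular $v_i=v$.

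\emph{Step 2: consecutive $v_j$ coincide.} Fix $j\in[n-1]$. Since $\mathbb{G}_j$ and $\mathbb{G}_{j+1}$ are compatible, with $J\coloneqq\phi_j(R_j)\cap\phi_{j+1}(L_{j+1})$ we have
\[
V(G_j)\cap V(G_{j+1})=\phi_j^{-1}(J)\cap R_j=\phi_{j+1}^{-1}(J)\cap L_{j+1}.
\]
Now $c\in J$, because $v_j\in R_j$ and $v_{j+1}\in L_{j+1}$ both carry label $c$. Hence $v_j\in\phi_j^{-1}(J)\cap R_j$, so $v_j\in V(G_{j+1})$, and more precisely $v_j\in L_{j+1}$. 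Moreover $\phi_{j+1}(v_j)=\phi(v_j)=c$, since $\phi_j$ and $\phi_{j+1}$ coincide on the common vertices. Injectivity of $\phi_{j+1}$ on $L_{j+1}$ then gives $v_j=v_{j+1}$.

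\emph{Step 3: conclude.} Chaining Step 2 gives $v_1=v_2=\cdots=v_n$, and this common vertex equals $v_i=v$. Hence $v\in L_j\cap R_j$ for all $j\in[n]$.

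The only delicate point is Step 1: one must check that the abstraction really records the label set of $L\cap R$ and not merely of $L$ and $R$ separately. This holds because the isomorphism in the definition of $\mathcal{I}_k$ preserves membership in both $L$ and $R$ simultaneously. Step 2 is a direct unfolding of the compatibility definition.
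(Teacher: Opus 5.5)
Your proof is correct and follows essentially the same route as the paper. You use the common abstraction $A$ to obtain, for each $j$, a vertex $v_j\in L_j\cap R_j$ with label $\phi(v)$, and then use compatibility of consecutive $\mathbb{G}_j,\mathbb{G}_{j+1}$ to chain $v_1=\cdots=v_n=v$. Your Step~2 simply writes out the compatibility argument (that $\phi(v)\in J$, hence $v_j\in L_{j+1}$, hence $v_j=v_{j+1}$ by injectivity), which the paper states in one line.
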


	\begin{proof}
		Let $\alpha \coloneqq \phi(v)$.
		Since $v \in L_i \cap R_i$, we have $\phi_i^{-1}(\alpha) \cap L_i = \{v\} = \phi_i^{-1}(\alpha) \cap R_i$.
		As all the $\mathbb{G}_j$'s have the same abstraction by \cref{cl:all-same-abstraction}, for every $j \in [n]$ there exists $v_j \in V(G_j)$ such that $\phi_j^{-1}(\alpha) \cap L_j = \{v_j\} = \phi_j^{-1}(\alpha) \cap R_j$. Now, for each $j\in[n-1]$, we have
		$v_j\in R_j$, $v_{j+1}\in L_{j+1}$, and $\phi(v_j)=\alpha=\phi(v_{j+1})$,
		which by the fact that $\mathbb{G}_j$ and $\mathbb{G}_{j+1}$ are compatible, implies
		$v_j=v_{j+1}$.
		Thus, $v_1=v_2=\cdots=v_n=v$.
		This completes the proof of the claim.
	\end{proof}

	Define
	\[
		X_1\coloneqq  L_1\quad \textrm{and}\quad X_{i}\coloneqq  R_{i-1} \cap L_{i}
	\]
	for each $i\in\set{2,\ldots,n}$, and define also
	\[
		V_i\coloneqq  V(G_i)-X_i
	\]
	for each $i\in[n]$.

	\begin{claim}
		\label{claim:structure-of-Vis}
		\hfill
		\begin{enumerate}
			\item $X_i-L_1\subseteq V_{i-1}$, for each $i\in\set{2,\ldots,n}$.
			      \label{item:xi-in-vi-1}
			\item $\set{V_1,\ldots,V_n}$ is a near partition of $V(G)-L_1$.
			      \label{item:vis-form-a-partition}
			\item For every edge $uv$ of $G-L_1$ there is $i\in[n-1]$ such that $u,v\in V_i\cup V_{i+1}$.
			      \label{item:vis-edges}
		\end{enumerate}
	\end{claim}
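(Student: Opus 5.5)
We prove the three items in order. The tools are the compatibility of the sequence $(\mathbb{G}_1,\dots,\mathbb{G}_n)$ (in particular the ``interval'' property $v\in V(G_i)\cap V(G_\ell)\Rightarrow v\in V(G_j)$ for $i<j<\ell$), \cref{lemma:basic}, and \cref{claim:idempotent-case-persistent-elements}.

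For~\ref{item:xi-in-vi-1}, fix $i\ge 2$ and $v\in X_i- L_1$. Then $v\in R_{i-1}\cap L_i\subseteq V(G_{i-1})$, and we must show $v\notin X_{i-1}$. If $i=2$, then $X_1=L_1$ and $v\notin L_1$, so we are done. If $i\ge 3$, suppose $v\in X_{i-1}=R_{i-2}\cap L_{i-1}$. Then $v\in L_{i-1}\cap R_{i-1}$, so \cref{claim:idempotent-case-persistent-elements} gives $v\in L_1$, a contradiction.

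For~\ref{item:vis-form-a-partition}, we first check $V_i\cap L_1=\emptyset$. For $i=1$ this holds by definition. For $i\ge2$, let $v\in L_1\cap V(G_i)$. Since $L_1\subseteq V(G_1)$, the interval property gives $v\in V(G_{i-1})\cap V(G_i)=R_{i-1}\cap L_i=X_i$, so $v\notin V_i$. Next, the union $\bigcup_i V_i\cup L_1$ covers $V(G)=\bigcup_i V(G_i)$: take $v\in V(G_i)- L_1$ with $i$ minimal. Then either $i=1$, so $v\in V_1$, or $v\notin V(G_{i-1})$, so $v\notin X_i$ and hence $v\in V_i$. For disjointness, suppose $v\in V_i\cap V_j$ with $i<j$. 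By the interval property, $v\in V(G_{j-1})\cap V(G_j)=X_j$, contradicting $v\in V_j$.

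For~\ref{item:vis-edges}, take an edge $uv$ of $G-L_1$. By \cref{lemma:basic}.\ref{item:basic:surjection-edges}, $uv\in E(G_j)$ for some $j$, so $u,v\in V(G_j)=V_j\cup X_j$. By~\ref{item:xi-in-vi-1} and the fact that $u,v\notin L_1$, each of $u,v$ lies in $V_j$ or in $X_j-L_1\subseteq V_{j-1}$; here $j\ge2$ whenever an endpoint lies in $X_j$, because $X_1=L_1$. Hence $u,v\in V_{j-1}\cup V_j$ when $j\ge 2$, and $u,v\in V_1$ when $j=1$. If $n=1$ we would need $i\in[n-1]$, but $n\ge 2$ in the unranked case, so we take $i=j-1$, or $i=1$ when $j=1$.

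The main obstacle is~\ref{item:xi-in-vi-1}: this is where idempotence enters, through \cref{claim:idempotent-case-persistent-elements}, to rule out a vertex that persists through several consecutive interfaces without lying in $L_1$.
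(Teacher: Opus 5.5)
Your proof is correct and follows the paper's argument essentially step by step: item (i) via \cref{claim:idempotent-case-persistent-elements}, item (iii) via \cref{lemma:basic} together with item (i), and item (ii) by separately checking $V_i\cap L_1=\emptyset$, covering, and pairwise disjointness. There are two local differences, both valid. You get covering by taking the minimal index $i$ with $v\in V(G_i)$, so that step needs neither item (i) nor idempotence, whereas the paper pushes $v\in X_i$ into $V_{i-1}$ using item (i). You also get pairwise disjointness directly from the interval property of compatible sequences, whereas the paper uses the separator property in \cref{lemma:basic}.
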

	\begin{proof}
		For the proof of~\ref{item:xi-in-vi-1},
		let $i\in \set{2,\ldots,n}$ and consider $v\in X_i-L_1$.
		We have $v\in X_i \subseteq R_{i-1}\subseteq V(G_{i-1})$.
		Note that if $v\in L_{i-1}$ then we get a contradiction, as by~\cref{claim:idempotent-case-persistent-elements}, $v\in L_{i-1}\cap R_{i-1}$ implies that $v\in L_1$, which is false.
		Therefore, $v\not\in L_{i-1}$, and in particular $v\not\in X_{i-1}$.
		It follows that $v\in V(G_{i-1})-X_{i-1} = V_{i-1}$.
		This completes the proof of~\ref{item:xi-in-vi-1}.

		For the proof of~\ref{item:vis-form-a-partition}, consider a vertex $v$ of $G-L_1$.
		First, we show that $v \in V_i$ for some $i\in[n]$.
		By~\cref{lemma:basic}.\ref{basic:item:vertex-partition}, there is $i\in[n]$ such that $v\in V(G_i)$. Fix such an index $i$.
		Thus, $v\in V(G_i) = V_i \cup X_i$.
		If $v\in V_i$, then we are done.
		Otherwise, $v\in X_i$.
		Since $v\not\in L_1=X_1$,
		we have $i\geq2$.
		Now by~\ref{item:xi-in-vi-1} we get $v\in V_{i-1}$.
		This completes the proof that $V(G)-L_1\subseteq \bigcup_{j\in[n]} V_j$.
        For the converse inclusion, suppose for  contradiction that there exists $v \in L_1 \cap V_j$ for some $j \in [n]$. 
        Then, $v \in V_j = V(G_j) - X_j$.
        Since $L_1 = X_1$, it holds that $j > 1$.
        Furthermore, $v \in V(G_1)$, since $v \in L_1$. 
        The sequence $\mathbb{G}_1, \ldots, \mathbb{G}_n$ is compatible, so we have $v \in V(G_{j-1})$.
        Thus, $v \in V(G_{j-1}) \cap V(G_j) = R_{j-1} \cap L_j$ since $\mathbb{G}_{j-1}$ and $\mathbb{G}_j$ are compatible. 
        Thus, $v \in X_j$, a contradiction.

		Now we argue that $V_i\cap V_j=\emptyset$ for all distinct $i,j\in[n]$.
		Suppose to the contrary that $v\in V_i\cap V_j$ for vertex $v$ and indices $i, j$, say with $i<j$.
		Recall that by~\cref{lemma:basic}.\ref{item:basic-separator}, $R_{j-1}\cap L_j = X_j$ separates $V_i$ from $V_j$ in $G$.
		This implies that $v\in X_j$, which contradicts the assumption that $v\in V_j$.
		This completes the proof of~\ref{item:vis-form-a-partition}.

		For the proof of~\ref{item:vis-edges}, consider an arbitrary edge $uv$ of $G-L_1$.
		By~\cref{lemma:basic}.\ref{item:basic:surjection-edges}, we can fix
		$i\in[n]$ such that $u,v\in V(G_i)$.
		Thus, $u,v\in V_i\cup (X_i-L_1)$.
		If $i=1$, then we are done since $X_1=L_1$.
		If $i\geq2$, then by~\ref{item:xi-in-vi-1} we have $X_i-L_1\subseteq V_{i-1}$, so we conclude that $u,v \subseteq V_{i-1}\cup V_i$.
		This concludes the proof of~\ref{item:vis-edges}.
	\end{proof}

	Since $\hrank(w_i)\leq h-1$ for each $i\in [n]$, by induction
	\[
		\cmp(G_i-X_i) \leq 3k(h-1)-1
	\]
	for each $i\in[n]$.
	For each $i\in[n]$, let $(F_i,\set{W_{i,x}}_{x\in V_i})$ be a suitable forest decomposition of $G_i-X_i$ witnessing $\cmp(G_i-X_i) \leq 3k(h-1)-1$.

	For each $i\in[n-1]$, let $M_i$ be an inclusion-wise maximal subset of $V_i$--$V_{i+1}$ edges in $G-L_1$ such that
	$F_i\cup F_{i+1} + M_i$ is a forest.
	Let
	\[
		F = \bigcup_{i\in[n]} F_i + \bigcup_{i\in[n-1]} M_i.
	\]
	\begin{claim}
		\label{claim:F-is-a-forest}
		$F$ is a forest and $V(F)=V(G)-L_1$.
	\end{claim}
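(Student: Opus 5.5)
The vertex-set claim is immediate. Each $(F_i,\set{W_{i,x}}_{x\in V_i})$ is a suitable forest decomposition of $G_i-X_i$, so $V(F_i)=V(G_i)-X_i=V_i$. The edges of each $M_i$ join $V_i$ and $V_{i+1}$. Hence $V(F)=\bigcup_i V_i$, which equals $V(G)-L_1$ by \cref{claim:structure-of-Vis}.\ref{item:vis-form-a-partition}.

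For acyclicity I would argue by induction on $j$ that $F^{(j)}\coloneqq \bigcup_{i\le j}F_i+\bigcup_{i<j}M_i$ is a forest. The case $j=1$ holds since $F_1$ is a forest. For the step, suppose $F^{(j)}$ has a cycle $C$. Since $F^{(j-1)}$ and $F_j$ are forests, $C$ meets $V_j$ and uses edges of $M_{j-1}$; these are the only edges leaving $V_j$ in $F^{(j)}$. Traversing $C$, pick a maximal subpath $P$ of $C$ with both end-edges in $M_{j-1}$ and all internal vertices in $V_1\cup\dots\cup V_{j-1}$. The endpoints $a,b$ of $P$ lie in $V_{j-1}$ (possibly $a=b$, or $P$ is a single vertex). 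Each has a neighbour in $V_j$ via an edge of $G$. Such an edge lies in some $G_\ell$, and since the $V_i$ form a near partition and $X_{j}$ separates, this must be $G_j$. So $a,b\in V(G_{j-1})\cap V(G_j)=R_{j-1}\cap L_j$.

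Now $P$ is a path in $G'_{j-1}-L_1$ between two vertices of $R_{j-1}=R'_{j-1}$. Splitting $P$ at its vertices in $R_{j-1}$, each piece joins two vertices of $R'_{j-1}$ with interior avoiding $L'_{j-1}\cup R'_{j-1}=L_1\cup R_{j-1}$. Each piece is therefore an edge of $\torso(G'_{j-1},L_1\cup R_{j-1})$.

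This is the key locality step, and it uses \cref{cl:all-same-abstraction}. Since $\abst{\mathbb{G}'_{j-1}}=\abst{\mathbb{G}_{j-1}}=A$, and right vertices are identified through their labels (both have right set $R_{j-1}$ with labels $\phi$), the same pair is adjacent in $\torso(G_{j-1},L_{j-1}\cup R_{j-1})$. So it is joined in $G_{j-1}$ by a path whose interior avoids $L_{j-1}\cup R_{j-1}\supseteq X_{j-1}$. The whole path, endpoints included (they are in $V_{j-1}$), then lies in $G_{j-1}-X_{j-1}$.

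One subtlety needs checking. The correspondence of right vertices via labels must be the identity on $R_{j-1}$. It is, because $\phi$ is injective on $R_{j-1}$ and both interface graphs have the same right set with the same labeling.

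Concatenating, $a$ and $b$ are in the same component of $G_{j-1}-X_{j-1}$. By the argument of \cref{lemma:connected}, applied to each component, a suitable forest decomposition has its forest components equal to the graph components. Hence $a$ and $b$ are joined by a path $Q$ in $F_{j-1}$.

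Replacing $P$ by $Q$ in $C$, for every such maximal excursion, yields a closed walk in $F_{j-1}\cup F_j+M_{j-1}$. I would run this step with $C$ chosen to minimize the number of vertices outside $V_{j-1}\cup V_j$, or argue directly with one excursion. The closed walk still contains an edge of $M_{j-1}$ used exactly once, so $F_{j-1}\cup F_j+M_{j-1}$ contains a cycle. This contradicts the choice of $M_{j-1}$.

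The main obstacle is the transfer from $G'_{j-1}$ to $G_{j-1}$ via the common abstraction. Handling degenerate excursions, such as $a=b$ or an edge of $M_{j-1}$ traversed twice by the walk, is routine bookkeeping: one extracts a genuine cycle containing an $M_{j-1}$-edge whose two sides are separated in $F_j$ plus that edge.
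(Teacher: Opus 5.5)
Your proof is correct. Its skeleton is the paper's: your induction on $j$ is the paper's choice of a cycle $C$ minimizing the largest index $i(C)$ of a part it meets. Your key step is also the paper's locality argument: an $R_{j-1}$-path in $G'_{j-1}-L_1$ gives an edge of $\torso(G'_{j-1},L_1\cup R_{j-1})$, and this edge transfers to $\torso(G_{j-1},L_{j-1}\cup R_{j-1})$ because both abstractions equal $A$ and $\phi$ is injective on $R_{j-1}$.

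The endgame differs.
The paper shows $i\ge 3$ and picks a single excursion that avoids $V_i$ and reaches $\bigcup_{j\le i-2}V_j$. It trims this excursion so that no interior vertex lies in $R_{i-1}$, then combines it with the transferred path inside $V_{i-1}$ into a cycle avoiding $V_i$. This contradicts the minimality of $i(C)$.
You instead replace all excursions at once, splitting at $R_{j-1}$-vertices rather than trimming. This gives a closed walk in $F_{j-1}\cup F_j+M_{j-1}$ that uses each $M_{j-1}$-edge of $C$ exactly once, which contradicts the choice of $M_{j-1}$ by parity.
Your route avoids having to locate an excursion into $\bigcup_{j\le i-2}V_j$, at the cost of the parity argument. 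Once all excursions are replaced, your hedges about minimizing outside vertices or using a single excursion are unnecessary.

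Your explicit conversion of the path in $G_{j-1}-X_{j-1}$ into a path of $F_{j-1}$ is needed in either version. It uses that the components of $F_{j-1}$ coincide with those of $G_{j-1}-X_{j-1}$. The paper's $P'$ is only a path of $G_{i-1}$, so the cycle in $P\cup P'$ is not a priori a cycle of $F$; the paper leaves this step implicit.

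Two routine facts you use silently deserve a line each.
First, $P$ lies in $G'_{j-1}$: an $M_\ell$-edge with $\ell\le j-2$ lies in $G_{\ell+1}$, by the interval property of compatible sequences and \cref{claim:idempotent-case-persistent-elements}.
Second, every vertex of $R_{j-1}-L_1$, including each interior splitting point, lies in $V_{j-1}$. A vertex of $X_{j-1}\cap R_{j-1}$ is in $L_{j-1}\cap R_{j-1}$, hence in $L_1$. So the transferred paths avoid $X_{j-1}$ at their endpoints as well.
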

	\begin{proof}
		First recall that
		$V(F_i)=V(G_i)-X_i=V_i$ for each $i\in[n]$, and that by~\cref{claim:structure-of-Vis}.\ref{item:vis-form-a-partition},
		the set $\set{V_1,\ldots,V_n}$ is a near partition of $V(G)-L_1$.
        Thus, $V(F)=V(G)-L_1$, and it only remains to show that $F$ is a forest.

		Arguing by contradiction, suppose that $F$ contains a cycle.
		For each cycle $C$ in $F$ define the {\em index $i(C)$ of $C$} to be the largest $i \in [n]$ such that
		$V(C)\cap V_i\neq\emptyset$.
		Now, let $C$ be a cycle in $F$ minimizing $i(C)$.
		For convenience, let $i\coloneqq i(C)$.
		Recall that $F[V_i]=F_i$ is a forest, thus
		$i\geq2$ and, by \cref{claim:structure-of-Vis}.\ref{item:vis-edges}, $V(C)\cap V_{i-1}\neq\emptyset$.
		Recall also that by the definition of $M_{i-1}$ we have that
		$F[V_{i-1}\cup V_i] = F_{i-1}\cup F_i + M_{i-1}$ is a forest.
		It follows that $i\geq3$ and that
		\begin{equation}
			\label{eq:P_intersects_i-2}
			V(C)\cap \bigcup_{j \in [i-2]} V_j\neq\emptyset.
		\end{equation}

		We claim that there exists a subpath $P$ of $C$ with endpoints $u$ and $v$ such that
		\begin{enumerate}
			\item $V(P)$ is disjoint from $V_i$;
			\item $P$ is an $R_{i-1}$-path in $G-L_1$, that is,
			      $u,v\in R_{i-1}$ and no other vertex of $P$ is in $R_{i-1}$;
			\item $V(P)$ intersects $\bigcup_{j \in [i-2]} V_j$.\label{item:intersects}
		\end{enumerate}
        To see this, observe first that, since $C$ contains a vertex of $\bigcup_{j \in [i-2]} V_j$ (by \eqref{eq:P_intersects_i-2}),
        there exists a subpath of $C$ that is disjoint from $V_i$ and intersects $\bigcup_{j \in [i-2]} V_j$.
		Let $Q$ be a an inclusion-wise maximal such subpath, and let $u$ and $v$ be the endpoints of $Q$. 
		Since $C$ contains a vertex of $V_i$ and $u,v\not\in V_i$, we conclude that both $u$ and $v$ have neighbors in $V_i$.
		Thus, we conclude that $u$ and $v$ belong to the separator~$X_i$, and therefore $u,v \in X_i \subseteq R_{i-1}$.
		Thus, $Q$ satisfies all the properties listed above except possibly for the fact that $Q$ could have some internal vertices in $R_{i-1}$. In the latter case, there is a subpath of $Q$ that has the desired properties.

		Consider $\mathbb{G}' \coloneqq \mathbb{G}'_{i-1} (= \mathbb{G}_1\oplus\cdots\oplus \mathbb{G}_{i-1})$ and
		let $\mathbb{G}' \eqqcolon (G',\phi',L',R')$.
		Observe that by \cref{lemma:basic}.\ref{basic:item:L-R-phi}, we have
		\[
			L' = L_1,\quad R' = R_{i-1},\quad\textrm{and}\quad \phi' = \phi_{|V(G')}.
		\]
		By \cref{cl:all-same-abstraction}, $A$ is the abstraction of both
		$\mathbb{G}'$ and $\mathbb{G}_{i-1}$.
		Hence, the two basic $k$-interface graphs
		\[(\torso(G',L_1,R_{i-1}),\phi|_{L_1\cup R_{i-1}},L_1,R_{i-1})\textrm{ and }
			(\torso(G_{i-1},L_{i-1},R_{i-1}),\phi|_{L_{i-1}\cup R_{i-1}},L_{i-1},R_{i-1})\] are isomorphic.

		Recall that $P$ is a path in $G[\bigcup_{j\in [i-1]} V_j]$ with both endpoints in $R_{i-1}$ and no
		internal vertex in $R_{i-1}$.
		Moreover, $P$ avoids $L_1$ since cycle $C$ avoids $L_1$.
		Note also that $G[\bigcup_{j\in [i-1]} V_j] - L_1 = G'-L_1$.
		We deduce that $P$ has no internal vertex in $L_1 \cup R_{i-1}$, and hence
		$uv$ is an edge in $\torso(G',L_1,R_{i-1})$.
		By the isomorphism mentioned above,
		$\torso(G_{i-1},L_{i-1},R_{i-1})$ contains an edge $u'v'$ with
		$u',v'\in R_{i-1}$,
		$\phi'(u')=\phi_{i-1}(u)$, and $\phi'(v')=\phi_{i-1}(v)$.
		Recall that $\phi' = \phi|_{V(G')}$, $\phi_{i-1} = \phi|_{V(G_{i-1})}$, and that $\phi'$ is injective on $R_{i-1}$ (as this is the set of right vertices of $\mathbb{G}_{i-1}$).
		This implies that $u'=u$ and $v'=v$, thus $\torso(G_{i-1},L_{i-1},R_{i-1})$ contains the edge $uv$.
		This edge is witnessed by a $uv$-path $P'$ in $G_{i-1}$ such that $P'$ contains no internal vertex in $L_{i-1}\cup R_{i-1}$. In particular, $P'$ is a path in $G_{i-1}-X_{i-1}$, and so $V(P')\subseteq V_{i-1}$.

		Recall that $P$ and $P'$ both avoid $V_i$.
		Furthermore, $V(P)$ intersects $\bigcup_{j\in [i-2]} V_j$ by \ref{item:intersects},
		while $P'$ does not, implying that $P' \neq P$.
		It follows that $P\cup P'$ contains a cycle $C'$.
		Note that $i(C') = i-1 < i=i(C)$, since $C'$ avoids $V_i$.
		This shows that $C'$ should have been chosen instead of $C$, a contradiction.
		This concludes the proof of the claim.
	\end{proof}

	Using \cref{claim:F-is-a-forest}, we now build a suitable forest decomposition of $G-L_1$ as follows.
	For each $i\in[n-1]$ and each vertex $x \in V_i \cup V_{i+1}$, let $C_{i,i+1}(x)$ be the connected component of $F_i\cup F_{i+1} + M_i$ containing $x$.
	For each $i\in[n]$ and each $x\in V_i$, we define
	\[
		W_x \coloneqq  \begin{cases}
			W_{1,x} \cup (V(C_{1,2}(x))\cap R_{1})                                      & \textrm{if $i=1$,}                    \\
			W_{i,x} \cup (V(C_{i-1,i}(x))\cap R_{i-1}) \cup (V(C_{i,i+1}(x))\cap R_{i}) & \textrm{if $i\in\set{2,\ldots,n-1}$,} \\
			W_{n,x} \cup (V(C_{n-1,n}(x))\cap R_{n-1})                                  & \textrm{if $i=n$.}
		\end{cases}
	\]
	We call the vertices in $W_x - W_{i,x}$ the {\em new vertices} of $W_x$.
	Observe that each new vertex of $W_x$ belongs to either $R_{i-1}$ or $R_i$.

	\begin{claim}
		$(F,\set{W_x}_{x\in V(G)-L_1})$ is a suitable forest decomposition of $G-L_1$.
		\label{claim:suitable-forest-decompostition}
	\end{claim}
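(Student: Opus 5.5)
The plan is to verify the four defining properties of a suitable forest decomposition for $(F,\set{W_x}_{x\in V(G)-L_1})$. Properties~\ref{item:forest-decomposition-spanning} and~\ref{item:forest-decomposition-u-in-its-own-bag} are immediate. By \cref{claim:F-is-a-forest}, $F$ is a forest with $V(F)=V(G)-L_1$, and every edge of $F$ comes from some $F_i\subseteq G_i-X_i$ or from some $M_i\subseteq E(G-L_1)$. Also, for $x\in V_i$ we have $x\in W_{i,x}\subseteq W_x$ because $(F_i,\set{W_{i,x}})$ is suitable.

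For property~\ref{item:forest-decomposition-connected}, I fix $u\in V(G)-L_1$ and let $i$ be the unique index with $u\in V_i$, using \cref{claim:structure-of-Vis}.\ref{item:vis-form-a-partition}. The first step is to show that $u\notin R_{i-1}$. Indeed, $R_{i-1}\cap V(G_i)\subseteq R_{i-1}\cap L_i=X_i$ by compatibility, and $u\notin X_i$. Hence $u$ can be a new vertex only through the term $V(C_{i,i+1}(x))\cap R_i$, and that term is available only for $x\in V_i\cup V_{i+1}$ (the index $i$ for $x\in V_i$, the index $i+1$ for $x\in V_{i+1}$). It follows that $\set{x\mid u\in W_x}$ equals $\set{x\in V_i\mid u\in W_{i,x}}$, together with $V(C_{i,i+1}(u))$ when $u\in R_i$. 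The first set induces a connected subtree of $F_i\subseteq F$ containing $u$, and the second is a connected subgraph of $F$ containing $u$. Their union is therefore connected, which gives property~\ref{item:forest-decomposition-connected}.

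For property~\ref{item:forest-decomposition-edges}, take an edge $uv$ of $G-L_1$. By \cref{lemma:basic}.\ref{item:basic:surjection-edges}, $uv\in E(G_j)$ for some $j$. If $u,v\in V_j$, the edge is covered by some bag $W_{j,x}\subseteq W_x$. Otherwise one endpoint, say $u$, lies in $X_j-L_1\subseteq V_{j-1}$ by \cref{claim:structure-of-Vis}.\ref{item:xi-in-vi-1}, and hence $u\in R_{j-1}$. I first handle the case $v\in V_j$. Here I would use the fact that each $F_m$ is spanning on every connected component of $G_m-X_m$; this follows from \cref{lemma:connected} applied componentwise. By maximality of $M_{j-1}$, the graph $F_{j-1}\cup F_j+M_{j-1}$ then has the same connected components as $G[V_{j-1}\cup V_j]$. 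Thus $u\in V(C_{j-1,j}(v))\cap R_{j-1}\subseteq W_v$, and $v\in W_v$, so the bag $W_v$ covers $uv$.

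The remaining case, where both $u,v\in X_j-L_1$ (so both lie in $V_{j-1}$ and are adjacent only through $G_j$), is the one I expect to be the main obstacle. My first attempt is to argue that $u$ and $v$ lie in the same component of $F_{j-1}\cup F_j+M_{j-1}$, since then $u\in W_v$ via the $R_{j-1}$ term. If $u$ or $v$ has a neighbour in $V_j$, this follows from the previous case. If not, I would invoke \cref{cl:all-same-abstraction} in the same way as in the proof of \cref{claim:F-is-a-forest}, now exploiting the left interface of the idempotent $A$, to transport the adjacency of $u,v\in L_j$ into a connection inside $G_{j-1}-X_{j-1}$ (or to rule the situation out). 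Failing that, I would enlarge the new-vertex terms slightly, which would cost at most an additional $k$ in the width.

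Finally, for the width, each $W_x$ consists of $W_{i,x}$, of size at most $3k(h-1)$, plus at most $|R_{i-1}|+|R_i|\le 2k$ new vertices. This gives width at most $3kh-k-1$, as required for~\eqref{eq:G-L1}.
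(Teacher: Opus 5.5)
Your checks of properties (iii), (iv) and (i) are correct and follow the paper. Your remark that $R_{i-1}\cap V(G_i)\subseteq X_i$, so that $u\in V_i$ can only be a new vertex through $R_i$, makes explicit a step the paper leaves implicit. The subcase $u\in X_j-L_1$, $v\in V_j$ of (ii) is also fine, although you only need maximality of $M_{j-1}$ applied to the edge $uv$ itself. Your intermediate assertion that $F_{j-1}\cup F_j+M_{j-1}$ has the same components as $G[V_{j-1}\cup V_j]$ is false in general, as the example below shows.

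\textbf{The gap.} You leave open the case $u,v\in X_j-L_1$ with $uv\in E(G_j)$, and none of your fallbacks closes it.
\begin{enumerate}
\item A neighbour of $u$ in $V_j$ only puts $u$ in the same component as that neighbour, not as $v$.
\item Idempotence cannot rule the situation out. Take $k=2$ and let $A$ be the class of the basic graph on $a_1,a_2,r_1,r_2$ with $\phi(a_\ell)=\phi(r_\ell)=\ell$, $L=\{a_1,a_2\}$, $R=\{r_1,r_2\}$, and the single edge $a_1a_2$. In $A\oplus A$ the new middle edge is invisible to the torso on the outer interfaces, so $A\boxplus A=A$. Moreover, $(A,A,A)$ is the only factorization witnessing $\hrank(AAA)=2$. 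Three compatible copies have $L_1=\{p_1,p_2\}$, $R_1=L_2=\{q_1,q_2\}$, $R_2=L_3=\{s_1,s_2\}$, $R_3=\{t_1,t_2\}$, and edge sets $\{p_1p_2\}$, $\{q_1q_2\}$, $\{s_1s_2\}$. Every $G_i-X_i$ is edgeless, so $q_1,q_2$ are not connected in $G_1-X_1$. Hence $F$ has no edges and every bag is $W_x=\{x\}$. The edge $q_1q_2$ of $G-L_1$ (present in $G_2$, absent from $G_1$) lies in no bag.
\item Enlarging the bags cannot help either. By the argument of \cref{lemma:connected}, when $u\in W_u$ for all $u$, the ends of every edge lie in one component of the indexing forest, whereas $q_1,q_2$ lie in different components of $F$.
\end{enumerate}

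\textbf{What the paper does.} It disposes of this case in one line: $u,v\in V_{i-1}$ by \cref{claim:structure-of-Vis}, so it ``replaces $i$ by $i-1$'' and takes a bag of $(F_{i-1},\{W_{i-1,x}\})$. That step tacitly needs $uv\in E(G_{i-1})$, which compatibility deliberately does not provide. The example above shows that the claim actually fails for the construction as written. So you located the crux correctly, but what is missing is a change to the construction, not a finer analysis.

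\textbf{Possible repairs.} The cleanest is to prove \cref{lem:technical} only for words in which consecutive letters have the same edges between their shared interface vertices. This property has three useful features:
\begin{enumerate}
\item it passes to factors;
\item it holds for the word built from a path decomposition, since all pieces are induced subgraphs of $G$;
\item by the interval property of compatible sequences, it makes every edge of $G_i$ inside $X_i$ an edge of $G_{i-1}$, which is exactly what the one-line reduction uses.
\end{enumerate}
It also justifies identifying $(G-X)[V(G_1)-X_1]$ with $G_1-X_1$ in the binary case. Alternatively, insert the edges of $G_i[X_i-L_1]$ into the decomposition of $G_{i-1}-X_{i-1}$ as in the proof of \cref{lemma:partition}. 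This costs $k$ in width, so $3k$ becomes $4k$ in \cref{lem:technical}, and \cref{claim:F-is-a-forest} must then be rechecked.
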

	\begin{proof}
		To prove the claim, let us show that properties \ref{item:forest-decomposition-connected}, \ref{item:forest-decomposition-edges}, \ref{item:forest-decomposition-spanning}, and \ref{item:forest-decomposition-u-in-its-own-bag} from the definition of a suitable forest decomposition all hold.
		By~\cref{claim:F-is-a-forest}, $F$ is a forest contained in $G-L_1$ with $V(F)=V(G)-L_1$, thus \ref{item:forest-decomposition-spanning} holds.
		Also, $u\in W_u$ for every $u\in V(G)-L_1$ by construction, so \ref{item:forest-decomposition-u-in-its-own-bag} holds as well.

		Next, we show property \ref{item:forest-decomposition-connected}, stating that for every $u\in V(G)-L_1$,
		the subgraph of $F$ induced by $\set{x \in V(F) \mid u \in W_x}$ is nonempty and connected.
		Let $u\in V(G)-L_1$ and let $i$ be such that $u\in V_i$ (which exists by \cref{claim:structure-of-Vis}.\ref{item:vis-form-a-partition}).
		Suppose first that $u\not\in R_i$ or $i=n$.
		Then, $u$ is not a new vertex of any bag $W_x$ with $x\in V(F)$, and it follows that
		\[
			\set{x\in V(F) \mid u\in W_x} = \set{x\in V(F_i) \mid u\in W_{i,x}}.
		\]
		Since the latter set is nonempty and connected in $F_i$, and since $F_i\subseteq F$, this set also induces a nonempty and connected subgraph of $F$, as desired.

		Suppose now that $i\in[n-1]$ and $u\in R_i$.
		Here, $u$ is potentially a new vertex of some bags $W_x$ with $x\in V(F)$.
		Observe that in this case
		\[
			\set{x\in V(F) \mid u\in W_x} = \set{x\in V(F_i) \mid u\in W_{i,x}}
			\cup V(C_{i,i+1}(u)).
		\]
		Note that $\set{x\in V(F_i) \mid u\in W_{i,x}}$ contains $u$ and induces a connected subgraph of $F_i$.
		Recall also that $C_{i,i+1}(u)$ is the connected component of $F_i\cup F_{i+1} + M_i$ containing $u$.
		Since $F_i\cup F_{i+1} + M_i\subseteq F$, it follows that $\set{x\in V(F) \mid u\in W_x}$ induces a nonempty and connected subgraph of $F$, as desired.
		Therefore, property \ref{item:forest-decomposition-connected} holds.

		It remains to show property \ref{item:forest-decomposition-edges}, stating that for every edge $uv$ in $G-L_1$ there is $x\in V(F)$ such that $u,v\in W_x$.
		Let $uv$ be an edge of $G-L_1$, and let $i\in[n]$ be such that $uv\in E(G_i)$ (which exists by~\cref{lemma:basic}.\ref{item:basic:surjection-edges}).
		Then, either (1) $i \geq 2$, $u, v \in X_i$, or (2) $u,v\in V_i$, or (3) $i\geq 2$, $u\in X_i$, and $v\in V_i$ (possibly exchanging $u$ and $v$ first if necessary).
		Suppose first that $u, v \in X_i$. Since $u, v \notin L_1$ then $u, v \in V_{i-1}$ by \cref{claim:structure-of-Vis}.\ref{item:xi-in-vi-1}. Thus, up to replacing $i$ by $i-1$, this case reduces to the case where $u, v \in V_i$.
		Suppose now that $u,v\in V_i$.
		Since $(F_i,\set{W_{i,x}}_{x\in V_i})$ is a forest decomposition of $G_i-X_i$,	there exists $x\in V_i$ such that $u,v \in W_{i,x} \subseteq W_x$, as desired.

		Suppose finally that $i\geq2$, $u\in X_i$ and $v\in V_i$.
		Then, it follows from \cref{claim:structure-of-Vis}.\ref{item:xi-in-vi-1} that $u\in V_{i-1}$.
		Thus, $uv$ is a $V_{i-1}$--$V_i$ edge.
		By the choice of $M_{i-1}$, this implies that $u$ and $v$ are in the same connected component of $F_{i-1} \cup F_i + M_{i-1}$, namely $C_{i-1,i}(u)=C_{i-1,i}(v)$.
		It follows from the definition of the bags that $V(C_{i-1,i}(v)) \cap R_{i-1} \subseteq W_v$.
		Since $u\in R_{i-1}$, this implies that $u \in W_v$. Since $v\in W_v$, we have $u,v \in W_v$, as desired.
		Hence property \ref{item:forest-decomposition-edges} holds.

		Therefore, $(F,\set{W_x}_{x\in V(G)-X})$ is a suitable forest decomposition of $G-L_1$, as claimed.
	\end{proof}

	Now, it only remains to bound the width of $(F,\set{W_x}_{x\in V(G)-X})$. Since $|W_{i, x}| \leq 3k(h-1)$ holds for every $i\in [n]$ and $x\in V_i$ by induction, and since $|R_{i}| \leq k$ holds for every $i\in [n]$, we deduce that $|W_x| \leq 3k(h-1) + 2k$ holds for every $x\in V(G-L_1)$, as desired.
	This concludes the proof of \eqref{eq:G-L1}, and of the lemma.
\end{proof}

\section*{Acknowledgments}

This research was carried out during the second edition of the Structural Graph Theory Workshop (STWOR) held at the conference center of the University of Warsaw in Chęciny (Poland), June 30th--July 5th, 2024. 
The workshop was supported by the Excellence Initiative -- Research University (IDUB) funds of the University of Warsaw, as well as project BOBR that is funded from the European Research Council (ERC) under the European Union’s Horizon 2020 research and innovation programme with grant agreement No.~948057. 
We thank the workshop organizers and other participants for providing a stimulating working environment.
We are particularly grateful to Linda Cook for suggesting us to work on the topic of this paper and for her contributions in the research discussions.

\bibliographystyle{abbrvnat}
\bibliography{bibliography}

\appendix

\section{\texorpdfstring{$\boxplus$}{+} is associative} \label{app:boxplus_asso}

Let $A_1,A_2,A_3\in \mathcal{A}_k$. We want to show that $(A_1 \boxplus A_2) \boxplus A_3 = \abst{A_1 \oplus A_2 \oplus A_3} = A_1 \boxplus (A_2 \boxplus A_3).$
By symmetry, it suffices to prove that \[(A_1 \boxplus A_2) \boxplus A_3 = \abst{A_1 \oplus A_2 \oplus A_3}.\]
For every $i \in [3]$, let $\mathbb{G}_i$ be a representative of $A_i$, chosen so that the sequence $\mathbb{G}_1, \mathbb{G}_2, \mathbb{G}_3$ is compatible.
Let $\mathbb{G}_i \eqqcolon (G_i, \phi_i, L_i, R_i)$ for every $i \in [3]$.
Define $\phi(v) \coloneqq \phi_i(v)$ for each $i \in [3]$ and $v \in V(G_i)$, which is well-defined since the sequence $\mathbb{G}_1, \mathbb{G}_2, \mathbb{G}_3$ is compatible.

By definition, we have $(A_1 \boxplus A_2) \boxplus A_3 = \abst{(A_1 \boxplus A_2) \oplus A_3} = \abst{\abst{A_1 \oplus A_2} \oplus A_3}$.
Note that $\abst{A_1 \oplus A_2} = \abst{\mathbb{G}_1 \oplus \mathbb{G}_2}$ by definition.
Recall that $\mathbb{G}_1 \oplus \mathbb{G}_2 = (G_1 \cup G_2, \phi|_{V(G_1) \cup V(G_2)}, L_1, R_2)$.
Therefore, the $k$-interface graph $\mathbb{T} = (\torso(G_1 \cup G_2, L_1 \cup R_2), \phi|_{L_1 \cup R_2}, L_1, R_2)$ is a representative of $\abst{\mathbb{G}_1 \oplus \mathbb{G}_2} = \abst{A_1 \oplus A_2}$.
Since the sequence $\mathbb{G}_1, \mathbb{G}_2, \mathbb{G}_3$ is compatible, $\mathbb{T}$ and $\mathbb{G}_3$ are compatible.
Thus, $\mathbb{T} \oplus \mathbb{G}_3$ is a representative of $\abst{A_1 \oplus A_2} \oplus A_3$.
By definition, we then have $(A_1 \boxplus A_2) \boxplus A_3 = \abst{\abst{A_1 \oplus A_2} \oplus A_3} = \abst{\mathbb{T} \oplus \mathbb{G}_3}$.
Let $T \coloneqq \torso(G_1 \cup G_2, L_1 \cup R_2)$.
Then, $\mathbb{T} \oplus \mathbb{G}_3 = (T \cup G_3, \phi|_{V(T) \cup V(G_3)}, L_1, R_3)$ and hence, the $k$-interface graph $\mathbb{H}_1 \coloneqq (\torso(T \cup G_3, L_1 \cup R_3), \phi|_{L_1 \cup R_3}, L_1, R_3)$ is a representative of $\abst{\mathbb{T} \oplus \mathbb{G}_3} = (A_1 \boxplus A_2) \boxplus A_3$.

Since $\mathbb{G}_1 \oplus \mathbb{G}_2 \oplus \mathbb{G}_3 = (G_1 \cup G_2 \cup G_3, \phi, L_1, R_3)$ is a representative of $A_1 \oplus A_2 \oplus A_3$, we have that $\abst{A_1 \oplus A_2 \oplus A_3} = \abst{\mathbb{G}_1 \oplus \mathbb{G}_2 \oplus \mathbb{G}_3}$.
Thus, the $k$-interface graph $\mathbb{H}_2 \coloneqq (\torso(G_1 \cup G_2 \cup G_3, L_1 \cup R_3), \phi|_{L_1 \cup R_3}, L_1, R_3)$ is a representative of $\abst{\mathbb{G}_1 \oplus \mathbb{G}_2 \oplus \mathbb{G}_3} = \abst{A_1 \oplus A_2 \oplus A_3}$. 

Therefore, to prove that $(A_1 \boxplus A_2) \boxplus A_3 = \abst{A_1 \oplus A_2 \oplus A_3}$, it suffices to prove that the $k$-interface graphs $\mathbb{H}_1$ and $\mathbb{H}_2$ are equal. 
To do so, it suffices to prove that the graphs $\torso(T \cup G_3, L_1 \cup R_3)$ and $\torso(G_1 \cup G_2 \cup G_3, L_1 \cup R_3)$ are equal. 
Since both graphs have the same vertex set, namely $L_1 \cup R_3$, it suffices to prove that they have the same edges.

Let $u, v \in L_1 \cup R_3$ and suppose that $uv \in E(\torso(G_1 \cup G_2 \cup G_3, L_1 \cup R_3))$. 
Thus, there exists a path $P$ from $u$ to $v$ in $G_1 \cup G_2 \cup G_3$ that is internally disjoint from $L_1 \cup R_3$.
Write $P = w_0 P_0 w_1 P_1 w_2 \ldots w_\ell P_{\ell} w_{\ell+1}$ with $w_0 = u$ and $w_{\ell+1} = v$, where each $w_i$ is in $L_1 \cup R_2 \cup L_3 \cup R_3$, and each $P_i$ is internally disjoint from $L_1 \cup R_2 \cup L_3 \cup R_3$.
Since $R_2 \cap L_3$ separates $V(G_1) \cup V(G_2)$ and $V(G_3)$ in $G_1 \cup G_2 \cup G_3$ by \cref{lemma:basic}.\ref{item:basic-separator}, each $P_i$ is either entirely contained in $G_1 \cup G_2$ or entirely contained in $G_3$.
Thus, each $P_i$ is either a path in $G_1 \cup G_2$ that is internally disjoint from $L_1 \cup R_2$, or a path in $G_3$ that is internally disjoint from $L_3 \cup R_3$.
In the first case, observe that $P_i$ witnesses an edge $w_iw_{i+1}$ in $T = \torso(G_1 \cup G_2, L_1 \cup R_2)$.
Therefore, replacing each such path $P_i$ by the edge $w_iw_{i+1}$ of $T$ yields a path $P'$ from $u$ to $v$ in $T \cup G_3$ that is internally disjoint from $L_1 \cup R_3$ (as $V(P') \subseteq V(P)$).
This proves that $uv \in E(\torso(T \cup G_3, L_1\cup R_3))$.

Conversely, suppose that $uv \in E(\torso(T \cup G_3, L_1\cup R_3))$.
Then, there exists a path $P'$ from $u$ to $v$ in $T \cup G_3$ that is internally disjoint from $L_1 \cup R_3$.
Write $P' = w_0P'_0w_1P'_1\ldots w_{\ell}P'_{\ell}w_{\ell+1}$ with $w_0 = u$ and $w_{\ell+1} = v$, where each $w_i$ is in $L_1 \cup R_2 \cup L_3 \cup R_3$ and each $P'_i$ is internally disjoint from $L_1 \cup R_2 \cup L_3 \cup R_3$.
Observe that each $P'_i$ is either an edge of $T$ or a path in $G_3$ that is internally disjoint from $L_1 \cup R_2 \cup L_3 \cup R_3$.
For every $i$ such that $P'_i$ is an edge $w_iw_{i+1}$ of $T = \torso(G_1 \cup G_2, L_1 \cup R_2)$, there exists a path $P_i$ from $w_i$ to $w_{i+1}$ in $G_1 \cup G_2$ that is internally disjoint from $L_1 \cup R_2$, hence, internally disjoint from $L_1 \cup R_3$, as the sequence $\mathbb{G}_1, \mathbb{G}_2, \mathbb{G}_3$ is compatible.
Therefore, replacing each such path $P'_i$ by the path $P_i$ yields a walk that contains a path $P$ from $u$ to $v$ in $G_1 \cup G_2 \cup G_3$ that is internally disjoint from $L_1 \cup R_3$.
This proves that $uv \in E(\torso(G_1 \cup G_2 \cup G_3, L_1\cup R_3))$.
Therefore, the graphs $(\torso(T \cup G_3, L_1 \cup R_3)$ and $\torso(G_1 \cup G_2 \cup G_3, L_1 \cup R_3)$ are equal, which concludes the proof.

\section{\texorpdfstring{$\abst{\cdot}$}{[[.]]} is a semigroup homomorphism} \label{app:proof-morphism}

Let $I_1, I_2 \in \mathcal{I}_k$. 
We want to show that $\abst{I_1 \oplus I_2} = \abst{I_1} \boxplus \abst{I_2}$.

For every $i \in [2]$, let $\mathbb{G}_i$ be a representative of $I_i$, chosen so that $\mathbb{G}_1$ and $\mathbb{G}_2$ are compatible.
Fix $i \in [2]$. 
We define $\mathbb{G}_i \eqqcolon (G_i, \phi_i, L_i, R_i)$, $H_i \coloneqq \torso(G_i,L_i\cup R_i)$, and $\mathbb{H}_i \coloneqq (H_i,\phi_i|_{L_i\cup R_i},L_i,R_i)$.
By definition, $\mathbb{H}_i$ is a representative of $\abst{\mathbb{G}_i}$.
Thus, by definition, $\mathbb{H}_i$ is a representative of $\abst{I_i}$.
Therefore, $\mathbb{H}_1 \oplus \mathbb{H}_2$ is a representative of $\abst{I_1} \oplus \abst{I_2}$.

Therefore, again by definition, we have \begin{align*}
    \abst{I_1} \boxplus \abst{I_2} &= \abst{\abst{I_1} \oplus \abst{I_2}} &  & \textrm{by definition of $\boxplus$} \\
        &= \abst{\mathbb{H}_1 \oplus \mathbb{H}_2} & & \textrm{by definition of $\abst{\cdot}$ on $\mathcal{I}_k$}
\end{align*}

By definition, $\mathbb{G}_1 \oplus \mathbb{G}_2$ is a representative of $I_1 \oplus I_2$, so $\abst{I_1 \oplus I_2} = \abst{\mathbb{G}_1 \oplus \mathbb{G}_2}$, again by definition.
Thus, to prove that $\abst{I_1 \oplus I_2} = \abst{I_1} \boxplus \abst{I_2}$, it suffices to prove that $\abst{\mathbb{G}_1 \oplus \mathbb{G}_2} = \abst{\mathbb{H}_1 \oplus \mathbb{H}_2}$.

Let $G \coloneqq G_1 \cup G_2$, and define $\phi(v) \coloneqq \phi_i(v)$ for each $i \in [2]$ and $v \in V(G_i)$, which is well-defined since $\mathbb{G}_1$ and $\mathbb{G}_2$ are compatible.
Then, $\mathbb{G}_1 \oplus \mathbb{G}_2 = (G, \phi, L_1, R_2)$.
Thus, $\abst{\mathbb{G}_1 \oplus \mathbb{G}_2}$ is the isomorphism class of the $k$-interface graph $(\torso(G, L_1\cup R_2), \phi|_{L_1 \cup R_2}, L_1, R_2)$.

Let $H \coloneqq H_1 \cup H_2$. 
Then, $\mathbb{H}_1 \oplus \mathbb{H}_2 = (H, \phi|_{L_1 \cup R_1 \cup L_2 \cup R_2}, L_1, R_2)$.
Thus, $\abst{\mathbb{H}_1 \oplus \mathbb{H}_2}$ is the isomorphism class of the $k$-interface graph $(\torso(H, L_1\cup R_2), \phi|_{L_1 \cup R_2}, L_1, R_2)$.
Therefore, to prove that $\abst{\mathbb{G}_1 \oplus \mathbb{G}_2} = \abst{\mathbb{H}_1 \oplus \mathbb{H}_2}$, it suffices to prove that the $k$-interface graphs $(\torso(G, L_1\cup R_2), \phi|_{L_1 \cup R_2}, L_1, R_2)$ and $(\torso(H, L_1\cup R_2), \phi|_{L_1 \cup R_2}, L_1, R_2)$ are equal, or, equivalently, to prove that the graphs $\torso(G, L_1\cup R_2)$ and $\torso(H, L_1\cup R_2)$ are equal. 
Since both graphs have the same vertex set, namely $L_1 \cup R_2$, it suffices to prove that they have the same edges.

Let $u, v \in L_1 \cup R_2$ and suppose that $uv \in E(\torso(G, L_1\cup R_2))$. Thus, there exists a path $P$ from $u$ to $v$ in $G$ that is internally disjoint from $L_1 \cup R_2$.
Write $P = w_0 P_0 w_1 P_1 w_2 \ldots w_\ell P_{\ell} w_{\ell+1}$ with $w_0 = u$ and $w_{\ell+1} = v$, where each $w_i$ is in $L_1 \cup R_1 \cup L_2 \cup R_2$, and each $P_i$ is internally disjoint from $L_1 \cup R_1 \cup L_2 \cup R_2$.
Since $R_1 \cap L_2$ separates $V(G_1)$ and $V(G_2)$ in $G = G_1 \cup G_2$ by \cref{lemma:basic}.\ref{item:basic-separator}, each $P_i$ is either entirely contained in $G_1$ or entirely contained in $G_2$.
Thus, each $P_i$ is either a path in $G_1$ that is internally disjoint from $L_1 \cup R_1$, or a path in $G_2$ that is internally disjoint from $L_2 \cup R_2$.
Thus, for every $i \in \{0, \ldots, \ell\}$, there is an edge $w_iw_{i+1}$ either in $H_1 = \torso(G_1, L_1 \cup R_1)$ or in $H_2 = \torso(G_2, L_2 \cup R_2)$.
Therefore, $P' \coloneqq uw_1w_2\ldots w_lv$ is a path from $u$ to $v$ in $H = H_1 \cup H_2$ that is internally disjoint from $L_1 \cup R_2$.
This proves that $uv \in E(\torso(H, L_1\cup R_2))$.

Conversely, suppose that $uv \in E(\torso(H, L_1\cup R_2))$.
Then, there exists a path $P'$ from $u$ to $v$ in $H$ that is internally disjoint from $L_1 \cup R_2$.
Write $P' = w_0w_1w_2\ldots w_{\ell}w_{\ell+1}$, with $w_0 = u$ and $w_{\ell+1} = v$.
For every $i \in \{0, \ldots, \ell\}$, $w_iw_{i+1}$ is an edge of $H = H_1 \cup H_2$, so there exists a path $P_i$ from $w_i$ to $w_{i+1}$, either in $G_1$ and internally disjoint from $L_1 \cup R_1$, or in $G_2$ and internally disjoint from $L_2 \cup R_2$.
Then, each $P_i$ is a path in $G_1 \cup G_2$ that is internally disjoint from $L_1 \cup R_1 \cup L_2 \cup R_2$ (because $\mathbb{G}_1$ and $\mathbb{G}_2$ are compatible).
Thus, $w_0P_0w_1\ldots w_{\ell}P_{\ell}w_{\ell+1}$ is a walk from $u$ to $v$ in $G = G_1 \cup G_2$ that contains a path from $u$ to $v$ in $G$ that is internally disjoint from $L_1 \cup R_2$.
This proves that $uv \in E(\torso(G, L_1\cup R_2))$.
Therefore, the graphs $\torso(G, L_1\cup R_2)$ and $\torso(H, L_1\cup R_2)$ are equal, which concludes the proof.
\end{document}